\newcommand\A{\mathcal{A}}
\newcommand\C{\mathbb C}
\newcommand\Q{\mathbb Q}
\newcommand\R{\mathbb R}
\newcommand\Z{\mathbb Z}
\newcommand{\G}{\mathcal{G}}
\renewcommand{\H}{\mathcal{H}}
\newcommand{\re}{\operatorname{Re}}
\newcommand{\im}{\operatorname{Im}}
\newcommand{\hth}{\hat{\theta}}
\newcommand{\ii}{\sqrt{-1}}
\newcommand\Aut{\operatorname{Aut}}
\newcommand{\del}{\partial}
\newcommand{\delbar}{\overline{\partial}}
\newcommand{\Vol}{\operatorname{Vol}}
\newcommand{\Hes}{\operatorname{Hess}}
\newcommand{\tr}{\operatorname{tr}}
\makeatletter \@addtoreset{equation}{section} \makeatother
\newtheorem{thm}{Theorem}
\newtheorem{prop}[thm]{Proposition}
\newtheorem{lem}[thm]{Lemma}
\newtheorem{cor}[thm]{Corollary}
\theoremstyle{definition}
\newtheorem{rmk}[thm]{Remark}
\newcommand\narrowdots{\hbox to 1em{.\hss.\hss.}}
\title[$\textrm{d}$HYM connections and scalar curvature]{Deformed Hermitian Yang-Mills connections, extended gauge group\\ and scalar curvature}
\author{Enrico Schlitzer}
\email{eschlitz@sissa.it}
\author{Jacopo Stoppa}
\email{jstoppa@sissa.it}
\address{SISSA, via Bonomea 265, 34136 Trieste, Italy}
\address{IGAP, Via Beirut 2, 34151 Trieste, Italy}
\begin{document}
\begin{abstract} The deformed Hermitian Yang-Mills (dHYM) equation is a special Lagrangian type condition in complex geometry. It requires the complex analogue of the Lagrangian phase, defined for Chern connections on holomorphic line bundles using a background K\"ahler metric, to be constant. In this paper we introduce and study dHYM equations with variable K\"ahler metric. These are coupled equations involving both the Lagrangian phase and the radius function, at the same time. They are obtained by using the extended gauge group to couple the moment map interpretation of dHYM connections, due to Collins-Yau and mirror to Thomas' moment map for special Lagrangians, to the Donald\-son-Fujiki picture of scalar curvature as a moment map. As a consequence one expects that solutions should satisfy a mixture of K-stability and Bridgeland-type stability. In special limits, or in special cases, we recover the K\"ahler-Yang-Mills system of \'Alvarez-C\'onsul, Garcia-Fernandez and Garc\'ia-Prada, and the coupled K\"ahler-Einstein equations of Hultgren-Witt Nystr\"om. After establishing several general results we focus on the equations and their large/small radius limits on abelian varieties, with a source term, following ideas of Feng and Sz\'ekelyhidi. 
\end{abstract}

\maketitle
\section{Introduction} 
Let $X$ be a compact $n$-dimensional K\"ahler manifold, with a fixed K\"ahler form $\omega$ and a holomorphic line bundle $L \to X$. For all Hermitian metrics $h$ on the fibres of $L$, the integral $\int_X (\omega - F(h))^n$ (where $F(h) \in \mathcal{A}^{1,1}(X, \ii \R)$ denotes the curvature of the Chern connection of $h$) only depends on $[\omega]$, $c_1(L)$. Assuming that it does not vanish, one defines uniquely a phase $e^{\sqrt{-1}\hth} \in U(1)$ by requiring 
\begin{equation*}
\int_X (\omega - F(h))^n \in \R_{> 0} e^{\ii \hth}. 
\end{equation*} 
Then we say that $h$ satisfies the \emph{deformed Hermitian Yang-Mills (dHYM) equation}\footnote{We write the equation following the sign convention of Collins-Yau \cite{collinsYau}, Section 2. It is important to note that in most references \eqref{dHYM} corresponds to the dHYM equation for the inverse line bundle $L^{-1}$.} if 
\begin{equation}{\label{dHYM}}
\im\left(e^{-\ii\hth} (\omega - F(h))^n\right) = 0.
\end{equation}
The dHYM equation (which first appeared in the physics literature \cite{marino}) has attracted considerable recent interest as the $B$-model analogue of the $A$-model special Lagrangian condition in homological mirror symmetry (see e.g. \cite{collinsYau, collinsXie, jacob, leungYau}). In this context one thinks of $X$ as a holomorphic submanifold (not necessarily proper) of an ambient Calabi-Yau $M$, with a mirror $\check{M}$. A remarkable conjectural consequence of this picture is that a solution of the dHYM equation should exist if and only if $L$ satisfies a suitable Bridgeland-type stability condition induced by $\omega$. Collins, Jacob and Yau \cite{collinsJacobYau, collinsYau} prove precise results in this direction, which do not depend on the ambient $M$. In particular they show that if $L$ admits a solution of the dHYM equation satisfying a suitable (``hypercritical phase") condition, and if one defines 
\begin{equation*}
Z_V(L) = -\int_V e^{-\ii \omega} ch(L)
\end{equation*} 
for all analytic subvarieties $V \subset X$, then $Z_V(L)$ always lies in the upper half-plane and, when $V \subset X$ is a proper subvariety, one has  
\begin{equation}\label{Bstab}
\operatorname{Arg} Z_V(L) > \operatorname{Arg} Z_X(L).
\end{equation}

In practice, the dHYM equation is usually written in terms of the Lagrangian phase operator, see e.g. \cite{collinsYau} Section 2. Namely one introduces the endomorphism of the tangent bundle given by $\omega^{-1}\left(\ii F(h)\right)$, with (globally well-defined) eigenvalues $\lambda_i$. Then we have
\begin{align*}
\frac{(\omega - F(h))^n}{\omega^n} &= \prod^n_{i=1}(1+\ii\lambda_i)\\
&= r_{\omega}(h) e^{\ii\Theta_{\omega}(h)},
\end{align*}
where the \emph{radius function} and \emph{Lagrangian phase operator} are defined (using the background metric $\omega$) respectively as
\begin{align*}
&r_{\omega}(h) = r_{\omega}\left(\ii F(h)\right) = \prod^n_{i=1}\sqrt{1+\lambda^2_i},\\
&\Theta_{\omega}(h)=\Theta_{\omega}\left(\ii F(h)\right) = \sum^n_{i=1}\arctan(\lambda_i).
\end{align*}
The dHYM equation becomes
\begin{equation*}
\Theta_{\omega}(h) = \hth \mod 2\pi.
\end{equation*}
\vskip.5cm
In the present paper we introduce and study dHYM equations with variable K\"ahler metric. These involve both the Lagrangian phase and the radius function, at the same time. The basic idea is that the radius function should be used to couple the dHYM metric $h$ to a variable K\"ahler metric $\omega$. The equations we propose are
\begin{align}\label{coupled_dHYM_Intro}
\begin{cases}
\Theta_{\omega}(h) = \hth \mod 2\pi\,\\
s(\omega) - \alpha r_{\omega}(h) = \hat{s} - \alpha \hat{r}, 
\end{cases} 
\end{align}
where $s(\omega)$, $\hat{s}$, $\hat{r}$ denote the scalar curvature and its average, respectively the average radius, and $\alpha > 0$ is an arbitrary coupling constant. The quantities $\hat{s}$, $\hat{r}$ are fixed by cohomology, and in particular 
\begin{equation*}
\hat{r} = \frac{1}{n!\Vol(M,\omega)}\left|\int_{X} (\omega- F(h))^n\right|. 
\end{equation*} 
Our coupled equations \eqref{coupled_dHYM_Intro} are natural as they are obtained by lifting the moment map interpretation of dHYM connections, due to Collins-Yau \cite{collinsYau} and mirror to Thomas' moment map for special Lagrangians \cite{richard}, to the extended gauge group of bundle automorphisms covering Hamiltonian symplectomorphisms (see Theorem \ref{ExtendedGaugeThm}). The well-known Donald\-son-Fujiki picture of scalar curvature as a moment map then allows coupling to the underlying K\"ahler metric, through the scalar (or, in special cases, the Ricci) curvature (see Corollary \ref{ExtendedGaugeCor} and Section \ref{RicciSubSec}). The resulting moment map partial differential equations \eqref{coupled_dHYM_Intro} describe special pairs formed by a holomorphic line bundle $L$, regarded as a $B$-model object, and a K\"ahler class, and it is natural to expect that these should satisfy a mixture of Bridgeland-type stability (as in \cite{collinsYau}, see e.g. the inequality \eqref{Bstab}), and K-stability (see e. g. \cite{donaldson}). In special limits, or in special cases, we recover the very interesting systems introduced by \'Alvarez-C\'onsul, Garcia-Fernandez and Garc\'ia-Prada \cite{AGG} (in the large radius limit, see Proposition \ref{LargeRadiusProp}) and Hultgren-Witt Nystr\"om \cite{wytt} (on complex surfaces, see Section \ref{RicciSubSec} and in particular Corollary \ref{CoupledKECor}). 
\begin{rmk} One can allow a general class in $H^{1,1}(X, \R)$ in the dHYM equation \eqref{dHYM}, not necessarily the first Chern class of a line bundle. At least in the absence of holomorphic $2$-forms, this can be interpreted as allowing a nontrivial $B$-field, as discussed in \cite{collinsYau} Section 8. Thus our coupled equations \eqref{coupled_dHYM_Intro} also admit a different (although closely related) interpretation: we may replace $[\ii F(h)]$ with a minimal lift of a $B$-field class $[B] \in H^2(X, \R)/H^2(X, \Z)$ (which exists under suitable assumptions), and regard the equations as trying to prescribe a \emph{canonical representative} $\omega + \ii B$ of a complefixied K\"ahler class $[\omega] + \ii [B]$, much as the cscK equation $s(\omega) = \hat{s}$ tries to find a canonical representative for $[\omega]$. Note that in the Calabi-Yau case, at zero coupling $\alpha = 0$ and in the large radius limit discussed in Proposition \ref{LargeRadiusProp}, these equations for $\omega + \ii B$ reduce to the conditions 
\begin{align*} 
\begin{cases}
\Delta_{\omega} B = 0\\
\operatorname{Ric}(\omega) = 0, 
\end{cases} 
\end{align*}
which are standard in the physics literature (see e.g. \cite{branes} Section 1.1).
\end{rmk}
We will study the coupled equations \eqref{coupled_dHYM_Intro} from the moment map point of view. After establishing several general results, in order to analyse a concrete case in more detail, we focus on the equations and their large/small radius limits on abelian varieties, with a source term, following ideas of Feng and Sz\'ekelyhidi \cite{fengSz}. In particular we prove a priori estimates (see Propositions \ref{aprioriPropSurfaces}, \ref{aprioriProp}) from which we can deduce existence in some cases (see Theorems \ref{MainThmSurface}, \ref{MainThmLargeRadius}, \ref{MainThmODE}, \ref{LargeSmallRadiusThmODE}). Our main results, together with the necessary background, are contained in Section \ref{BackSec}.\\ 
\noindent{\textbf{Acknowledgements.}} We are very grateful to Tristan Collins, Mario Garcia-Fernandez and to an anonymous Referee for important corrections and suggestions on this manuscript. We also thank Alessandro Tomasiello and all the participants in the K\"ahler geometry seminar at IGAP, Trieste.  
\section{Background and main results}\label{BackSec}
\subsection{Moment map description} Thomas \cite{richard} gave a moment map interpretation of the special Lagrangian equation on submanifolds of $\check{M}$. Mirror to this, there is a moment map description of the dHYM equation \eqref{dHYM}, due to Collins and Yau, which in fact is intrinsic to $X$. Namely, fixing a metric $h$ as above, one considers the space $\mathscr{A}^{1,1}$ of $h$-unitary integrable connections on $L$, endowed with the natural action of the gauge group $\G$ of unitary bundle automorphisms of $L$ covering the identity on $X$, and the (nonstandard, nonlinear, possibly degenerate\footnote{According to \cite{collinsYau} Section 2, it is nondegenerate at least in an open neighbourhood of a solution to the dHYM equation.}) symplectic form given at $A \in \mathscr{A}^{1,1}$ by 
\begin{equation*}
\omega^{\text{dHYM}}_A( a, b) = -\int_X a\wedge b\wedge\re\left(e^{-\ii\hth} (\omega - F(A))^{n-1}\right),
\end{equation*} 
with $a,b \in T_A \mathscr{A}^{1,1} \subset \A^1(X,\sqrt{-1}\mathbb{R})$. According to \cite{collinsYau} Section 2 the action of $\G$ on $\mathscr{A}^{1,1}$ is Hamiltonian, with equivariant moment map at $A \in \mathscr{A}^{1,1}$, evaluated on $\varphi \in \operatorname{Lie}(\G)$, given by
\begin{equation}\label{CollinsMomMap} 
\langle\mu_{\G}(A), \varphi\rangle = \frac{\ii}{n}\int_X \varphi\im  \left(e^{-\ii\hth} (\omega - F(A))^n\right).
\end{equation}
A standard argument then shows that the dHYM equation \eqref{dHYM} becomes precisely the problem of finding zeroes of the moment map $\mu_{\G}$ inside the orbits of the complexified gauge group $\G^{\C}$.
\subsection{Lift to the extended gauge group} Fix a Hermitian metric $h$ on $L$ as above. The \emph{extended gauge group} $\widetilde{\G}$ of $L$ consists of unitary bundle automorphisms of $(L, h)$ which cover a Hamiltonian symplectomorphism of $X$, with respect to the fixed symplectic (in fact K\"ahler) form $\omega$. It can be shown that $\widetilde{\G}$ fits into an exact sequence of infinite-dimensional Lie groups
\begin{equation}\label{LieSplitting}
1 \to \mathcal{G} \xrightarrow{\,\,\,\iota\,\,\,} \widetilde{\G} \xrightarrow{\,\,\,p\,\,\,} \H \to 1,
\end{equation}
where $\H$ denotes the group of Hamiltonian symplectomorphisms of $(X, \omega)$. 

As observed by \'Alvarez-C\'onsul, Garcia-Fernandez and Garc\'ia-Prada \cite{AGG}, there is a natural action of $\widetilde{\G}$ on the space of all unitary connections $\mathscr{A}$, given by thinking of a connection $A$ as a projection operator $\theta_A$ on the vertical bundle,
\begin{equation}\label{VertProj}
\theta_A\!: TL \to VL,\quad g\cdot \theta_A = g_* \circ \theta_A \circ (g_*)^{-1}.
\end{equation}
The resulting action was studied in detail in \cite{AGG}, in the much more general context of arbitrary principal bundles for a compact real Lie group. The main application considered in \cite{AGG} concerns the case when the space of connections is endowed with the standard, linear Atiyah-Bott symplectic form, which for a line bundle is
\begin{equation*}
\omega^{\text{AB}}_A( a, b) = -\int_X a\wedge b\wedge \omega^{n-1}.
\end{equation*}
However we can use this general setup to obtain a result for the symplectic form $\omega^{\rm{dHYM}}$.
\begin{thm}\label{ExtendedGaugeThm} The action of the extended gauge group $\widetilde{\G}$ on the space of all unitary connections $\mathscr{A}$, endowed with obvious extension of the symplectic form $\omega^{\rm{dHYM}}$, is Hamiltonian, with equivariant moment map at $A \in \mathscr{A}$, evaluated on $\zeta\in\operatorname{Lie}(\widetilde{\G})$, given by 
\begin{equation*}
\langle \mu_{\widetilde{\G}}(A), \zeta \rangle = \langle \mu_{ \G }(A), \theta_A(\zeta) \rangle +\frac{1}{n}\int_X p_*(\zeta) \re  \left(e^{-\ii\hth} (\omega - F(A))^n\right),    
\end{equation*}
where $\mu_{ \G }$ is defined as in \eqref{CollinsMomMap} and $p$ is the projection appearing in \eqref{LieSplitting}.
\end{thm}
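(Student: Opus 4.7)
The plan is to follow closely the strategy of \'Alvarez-C\'onsul--Garcia-Fernandez--Garc\'ia-Prada \cite{AGG}, who carry out the analogous lifting for the Atiyah-Bott symplectic form in the more general setting of principal bundles. The structural input they use is purely formal and transports to any $\widetilde{\G}$-invariant symplectic form on $\mathscr{A}$: the $A$-dependent vector space splitting $\zeta = \theta_A(\zeta) + \zeta^H_A$ of $\operatorname{Lie}(\widetilde{\G})$, where $\zeta^H_A$ is the horizontal lift of $Y = p_*(\zeta)$ with respect to $A$, reduces the theorem to two checks: the moment map identity on elements of $\operatorname{Lie}(\G)$, which is exactly Collins-Yau, and the identity on horizontal lifts. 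The new content is that the nonlinearity of $\omega^{\text{dHYM}}$ in $A$ does not obstruct the calculation.

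First I compute the infinitesimal action. By Cartan's formula combined with $\iota_{\zeta^H_A}\theta_A = 0$ and $d\theta_A = \pi^* F(A)$, the variation of the connection generated by $\zeta$ is
\begin{equation*}
X_\zeta(A) = d(\theta_A(\zeta)) - \iota_Y F(A) \in T_A\mathscr{A},
\end{equation*}
where $Y = p_*(\zeta)$ is the underlying Hamiltonian vector field with Hamiltonian $\varphi$ (so $\iota_Y\omega = d\varphi$).

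Next I differentiate $\langle \mu_{\widetilde{\G}}(A), \zeta \rangle$ along a variation $b \in T_A\mathscr{A}$. The first summand in the proposed formula produces two contributions: the Collins-Yau pairing $\omega^{\text{dHYM}}_A(d(\theta_A(\zeta)), b)$, and a correction term $\int_X b(Y)\,\mu_\G(A)$ coming from the identity $\theta_{A+tb}(\zeta) = \theta_A(\zeta) + t\,b(Y)$ (which itself follows from $\theta_{A+tb} = \theta_A + t\pi^* b$). For the second summand, the formula $\delta F(A)(b) = db$ combined with closedness of $\re(e^{-\ii\hth}(\omega - F(A))^{n-1})$ and integration by parts yields $\int_X b\wedge d\varphi\wedge\re(e^{-\ii\hth}(\omega - F(A))^{n-1})$.

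The heart of the proof is to combine these three terms and match them against $\omega^{\text{dHYM}}_A(X_\zeta(A), b)$. The key algebraic identity is $b(Y)\,\mu = b\wedge\iota_Y\mu$, valid for any top-degree form $\mu$ (a one-line consequence of $\iota_Y(b\wedge\mu) = 0$); applied to $\mu_\G(A) = \frac{\ii}{n}\im(e^{-\ii\hth}(\omega-F(A))^n)$ and expanded via the Leibniz rule, this produces summands of shape $b\wedge\iota_Y\omega\wedge\re(\ldots)$ and $b\wedge\iota_Y F(A)\wedge\re(\ldots)$. Substituting $\iota_Y\omega = d\varphi$, the first exactly cancels the integration-by-parts term, while the second reassembles into $-\omega^{\text{dHYM}}_A(\iota_Y F(A), b)$. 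Careful tracking of signs across real and imaginary parts is the most delicate bookkeeping, and is the step I expect to be the main obstacle.

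Finally, equivariance of $\mu_{\widetilde{\G}}$ is essentially automatic: the first summand is the $\G$-equivariant Collins-Yau moment map paired with $\theta_A(\zeta)$, and $\theta_A$, $F(A)$ and the Hamiltonian $\varphi = p_*(\zeta)$ all transform naturally under $\widetilde{\G}$, so the second summand is $\widetilde{\G}$-equivariant as well; alternatively, equivariance follows from the moment map identity together with connectedness of $\widetilde{\G}$.
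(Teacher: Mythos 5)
Your proposal is correct and follows essentially the same route as the paper: both arguments rest on the contraction identity $b(Y)\,\mu = b\wedge\iota_Y\mu$ for the vanishing $(2n+1)$-form $b\wedge\im\left(e^{-\ii\hth}(\omega-F(A))^n\right)$, the formula $-\iota_Y F(A)$ for the horizontal part of the infinitesimal action, and integration by parts against the closed form $\re\left(e^{-\ii\hth}(\omega-F(A))^{n-1}\right)$. The only organizational difference is that the paper invokes \cite{AGG}, Proposition 1.3 to reduce the check to horizontal lifts $\theta^{\perp}_A\eta_\phi$ (the condition \eqref{ModifiedMomentCondition}), whereas you verify the moment map identity for general $\zeta$ directly; this is an equivalent packaging of the same computation.
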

Notice that we are identifying the Lie algebra $\operatorname{Lie}({\H})$ of Hamiltonian vector fields on $(X,\omega)$ with the space $C^{\infty}_0(X,\omega)$ of Hamiltonian functions with zero average.
Theorem \ref{ExtendedGaugeThm} is proved in Section \ref{gaugeSec}. Its analogue for the standard symplectic form $\omega^{\rm{AB}}$ is (a special case of) \cite{AGG}, Proposition 1.6: we will return to this below.
\subsection{Coupling to a variable metric} It was shown by Jacob and Yau \cite{jacob} that solutions of the dHYM equation are unique. It follows that the naive problem of looking for zeroes of the extended moment map $\mu_{\widetilde{\G}}$ is overdetermined.

The fundamental work of Donaldson \cite{donaldson} and Fujiki \cite{fujiki} on scalar curvature as a moment map suggests that the right thing to do instead is to let the extended gauge group act on a larger space. Let $\mathscr{A}$ be the space of all unitary connections on $(L, h)$, as above, and let $\mathscr{J}$ be the space of $\omega$-compatible almost complex structures on $X$. We endow $\mathscr{A}$ with the symplectic form given by the obvious extension of $\omega^{\rm{dHYM}}$ and $\mathscr{J}$ with the Donaldson-Fujiki form $\omega^{\rm{DF}}$. We consider the induced action of the extended gauge group $\widetilde{\G}$ on the product $\mathscr{A}\times\mathscr{J}$, which on the second factor is given by 
\begin{equation*}
g \cdot J = p(g)_* \circ J \circ p(g)_*^{-1}.
\end{equation*}
This preserves the space $\mathscr{P} \subset \mathscr{A}\times\mathscr{J}$ consisting of pairs $(A, J)$ of a unitary connection $A$ and an integrable complex structure $J$, such that $A$ is integrable with respect to $J$. We denote by $s(J)$ the scalar curvature of the metric determined by $\omega$ and $J$, and let $\alpha$ be a real \emph{positive} ``coupling constant". 
\begin{cor}\label{ExtendedGaugeCor} The action of $\widetilde{\G}$ on $\mathscr{A}\times\mathscr{J}$, endowed with the symplectic form 
\begin{equation*}
\omega_{\alpha} = n\alpha\omega^{\rm{dHYM}} + \,\omega^{\rm{DF}},
\end{equation*}
for $\alpha > 0$, is Hamiltonian, with equivariant moment map at $(A, J)$, acting on $\zeta \in \operatorname{Lie}(\widetilde{\G})$, given by \begin{equation*}
\langle \mu_{\alpha}(A, J), \zeta \rangle = -\int_X p_*(\zeta) s(J) \frac{\omega^n}{n!} + n\alpha \langle \mu_{\widetilde{\G}}(A), \zeta \rangle.    
\end{equation*}
\end{cor}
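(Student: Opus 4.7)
The plan is to view the corollary as a direct consequence of Theorem \ref{ExtendedGaugeThm} combined with the Donaldson-Fujiki description of scalar curvature as a moment map, exploiting the fact that moment maps behave additively under taking sums of symplectic forms on a product of Hamiltonian spaces.

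I would first record two standard facts. If a Lie group acts on symplectic manifolds $(M_1,\omega_1)$ and $(M_2,\omega_2)$ with equivariant moment maps $\mu_1$ and $\mu_2$, and $c_1, c_2 \in \R$, then its diagonal action on $(M_1\times M_2,\, c_1\pi_1^*\omega_1 + c_2\pi_2^*\omega_2)$ is Hamiltonian with equivariant moment map $c_1\mu_1 + c_2\mu_2$. Secondly, if the action on $M_2$ of a group $G$ factors through a surjective homomorphism $p\colon G \to H$ where $H$ acts with equivariant moment map $\nu$, then an equivariant $G$-moment map is simply $p^*\nu$, in the sense that $\langle\mu_G(\cdot),\zeta\rangle = \langle \nu(\cdot), p_*\zeta\rangle$ on $\operatorname{Lie}(G)$; this follows because the fundamental vector field on $M_2$ generated by $\zeta$ coincides with the one generated by $p_*\zeta$.

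Applying the first fact to $\mathscr{A}\times\mathscr{J}$ equipped with $n\alpha\,\omega^{\text{dHYM}}$ and $\omega^{\text{DF}}$, Theorem \ref{ExtendedGaugeThm} immediately contributes the term $n\alpha\langle\mu_{\widetilde{\G}}(A),\zeta\rangle$. For the remaining piece I would invoke the Donaldson-Fujiki theorem, which asserts that the $\H$-action on $(\mathscr{J},\omega^{\text{DF}})$ is Hamiltonian with equivariant moment map sending $J$ to the functional
\begin{equation*}
\varphi \longmapsto -\int_X \varphi\,(s(J)-\hat{s})\,\frac{\omega^n}{n!}
\end{equation*}
on $\operatorname{Lie}(\H) = C^{\infty}_0(X,\omega)$. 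Since the $\widetilde{\G}$-action on $\mathscr{J}$ is $g\cdot J = p(g)_*\circ J\circ p(g)_*^{-1}$, it factors through $p$, and the second fact above delivers the $\widetilde{\G}$-moment map contribution $\zeta \mapsto -\int_X p_*(\zeta)(s(J)-\hat{s})\,\omega^n/n!$. Because $p_*(\zeta)$ has zero average under the identification of $\operatorname{Lie}(\H)$ with $C^{\infty}_0(X,\omega)$, the constant $\hat{s}$ integrates to zero, leaving exactly $-\int_X p_*(\zeta)\, s(J)\,\omega^n/n!$, which matches the stated formula.

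There is no serious obstacle beyond checking that the $\widetilde{\G}$-action on $\mathscr{A}\times\mathscr{J}$ is genuinely the diagonal one and that the space $\mathscr{P}$ of integrable pairs is preserved; the former is built into the construction, while the latter is standard. The only bookkeeping point is keeping track of the coefficient $n\alpha$ and of the cancellation of $\hat{s}$ that makes the two equivalent expressions for the scalar curvature moment map agree.
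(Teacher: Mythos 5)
Your proposal is correct and follows essentially the same route as the paper: the authors likewise obtain the corollary by combining Theorem \ref{ExtendedGaugeThm} with the Donaldson--Fujiki moment map on $\mathscr{J}$, using additivity of moment maps on the product and the fact that the $\widetilde{\G}$-action on $\mathscr{J}$ factors through $p$. Your extra bookkeeping (the cancellation of $\hat{s}$ against the zero-average constraint, and the explicit statement of the two standard facts) is consistent with, and slightly more detailed than, the paper's one-paragraph argument.
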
  
The proof is given in Section \ref{gaugeSec}. The analogue of this for the Atiyah-Bott symplectic form is \cite{AGG}, Proposition 2.1. 

For each $A \in \mathscr{A}$, the operator $\theta_A$ induces an equivariant \emph{vector space} splitting   
\begin{equation*}
\operatorname{Lie}(\widetilde{\G}) = \operatorname{Lie}(\G) \bigoplus_{\theta_A} \operatorname{Lie}(\H).
\end{equation*}
We consider the problem of finding a pair $(A, J) \in \mathscr{P}$ such that, in the above splitting with respect to $\theta_A$, the moment map $\mu_{\alpha}$ vanishes on $\operatorname{Lie}(\G)$ and acts as some prescribed element $-f \in C^{\infty}_0(X, \omega) \cong \operatorname{Lie}(\H)^*$ on Hamiltonian vector fields:
\begin{equation*}
\langle \mu_{\alpha}(A, J), \zeta  \rangle = \langle \mu_{\alpha}(A, J), \zeta'\oplus_{\theta_A}\zeta'' \rangle = -\int_X \zeta'' f \frac{\omega^n}{n!}.    
\end{equation*}
This is equivalent to the equations
\begin{align*}
\begin{cases}
\im\left(e^{-\ii\hth} (\omega - F(A))^n\right)=0\\
s(J) - \alpha \frac{\re\left(e^{-\ii\hth} (\omega - F(A))^n\right)}{\omega^n} = f + c, 
\end{cases}
\end{align*} 
where the only possible constant $c$ is determined once the coupling $\alpha$ is chosen, depending only on $[\omega]$ and $c_1(L)$. Note that the problem of finding a zero of the moment map $\mu_{\alpha}$ corresponds to the choice $f \equiv 0$.

Formally complexifying the action of $\widetilde{\G}$, following Donaldson \cite{donaldson}, we keep the complex structures on $L$ and $X$ fixed and vary the Hermitian metric $h$ on $L$ and the K\"ahler form $\omega$ in its K\"ahler class instead. Thus we arrive at the \emph{dHYM equation coupled to a variable K\"ahler metric} 
\begin{align}\label{coupled_dHYM}
\begin{cases}
\im\left(e^{-\ii\hth} (\omega - F(h))^n\right)=0\\
s(\omega) - \alpha \frac{\re\left(e^{-\ii\hth} (\omega - F(h))^n\right)}{\omega^n} = f, 
\end{cases} 
\end{align}
where we absorbed the constant $c$ in the datum $f$. 

These equations are the main object of study in the present paper. Important motivation for this study comes from the fact that, when the datum $f$ is constant, so that we are looking for zeroes of the moment map, it is natural to expect that solutions should satisfy a mixture of Bridgeland-type stability and K-stability. Note that it is straightforward to rewrite \eqref{coupled_dHYM} in terms of the Lagrangian phase operator and the radius function, as we did in \eqref{coupled_dHYM_Intro}.
 
\begin{rmk}\label{realizRmk} Given the origin of the dHYM equation, it is natural to ask whether a given solution of the coupled equations \eqref{coupled_dHYM} can be effectively realised in the $B$-model, that is if the pair $(X, \omega)$ underlying a solution can be embedded holomorphically and isometrically in a Calabi-Yau $M$. Note that, at the infinitesimal level, this is always possible: for example, by the results of \cite{kaledin}, we may embed $(X, \omega)$ isometrically as the zero section of the holomorphic cotangent bundle $T^*X$, endowed with a hyperk\"ahler metric defined in a \emph{formal} neighbourhood of the zero section.
\end{rmk}
\begin{rmk}\label{scalingRmk} As we mentioned in the Introduction, the dHYM equation makes sense and plays a role in mirror symmetry even when we replace $c_1(L)$ with some arbitrary class $[F]$ of type $(1,1)$, not necessarily rational. The same holds for the coupled equations \eqref{coupled_dHYM}. When $[F]$ is rational, we can obtain solutions in a class $c_1(L)$ from solutions in $[F]$ by rescaling $F$ and $\omega$ appropriately. We will often use this fact, sometimes without further comment. 
\end{rmk}  
\subsection{Reduction to Ricci curvature}\label{RicciSubSec} We describe a special case in which the dependence on the scalar curvature in the equations \eqref{coupled_dHYM} can be reduced to the Ricci curvature. To see this we note that by using the dHYM equation in order to eliminate the top power $(F(h))^n$ we can always write 
\begin{align*}
\frac{\re\left(e^{-\ii\hth} (\omega - F(h))^n\right)}{\omega^n} &= \sum^{n-1}_{i=0} \lambda_i(\hth)\Lambda^i_{\omega} (F(h))^{i}
\end{align*}
for unique coefficients $\lambda_i(\hth)$. It follows that we may rewrite the equation in \eqref{coupled_dHYM} involving the scalar curvature as
\begin{equation*}
\Lambda_{\omega}\left(\operatorname{Ric}(\omega) - \alpha \lambda_1(\hth) F(h)\right) - \alpha \sum^{n-1}_{i=2} \lambda_i(\hth)\Lambda^{i}_{\omega} (F(h))^{i} = c  
\end{equation*} 
for a unique constant $c$. Now we can uniquely solve 
\begin{equation*}
\ii \Lambda_{\omega} \del\delbar h = \Delta_{\omega} h = \sum^{n-1}_{i = 2} \lambda_i(\hth)\Lambda^i_{\omega} F^{i}(h) - \sum^{n-1}_{i = 2} \lambda_i(\hth)\int_X \Lambda^i_{\omega} (F(h))^{i} \frac{\omega^n}{n!}
\end{equation*}
with the normalisation $\int_X h \omega^n = 0$. Thus we may rewrite our equation as
\begin{equation*}
\operatorname{Ric}(\omega) - \alpha \lambda_1(\hth) F(h) - \alpha \del\delbar h = \lambda\omega
\end{equation*}
for a unique $\lambda$, provided the cohomological condition
\begin{equation}\label{CohoMA}
 c_1(X)  = \lambda [\omega] + \alpha \lambda_1(\hth) [F(h)]
\end{equation}
is satisfied. 

In the special case of complex surfaces the computation above amounts to expressing $(F(h))^2$ in terms of $\omega^2$ and $F(h)\wedge \omega$ by using the dHYM equation. Moreover it is well known that the dHYM equation on surfaces reduces to a complex Monge-Amp\`ere equation (see e.g. the proof of Proposition \ref{aprioriPropSurfaces} below). Thus under the condition \eqref{CohoMA} on surfaces the coupled equations \eqref{coupled_dHYM} become the system of complex Monge-Amp\`ere equations
\begin{align}\label{RicciEquation2d}
\begin{cases}
\left(\ii\sin(\hth ) F(h) + \cos(\hth ) \omega\right)^2 = \omega^2\\
\operatorname{Ric}(\omega) = \lambda \omega + \frac{ \alpha}{\sin(\hth)} \ii F(h).
\end{cases}
\end{align}
With the assumption 
\begin{equation*}
\ii \sin(\hth ) F(h) + \cos(\hth ) \omega > 0
\end{equation*}
and provided the equalities
\begin{equation*}
\lambda = 1+\cos(\hth),\, \alpha = \sin^2(\hth)
\end{equation*}
hold, the system \eqref{RicciEquation2d} is given precisely by the coupled K\"ahler-Einstein equations in the sense of Hultgren-Witt Nystr\"om \cite{wytt}
\begin{align}\label{CoupledKE}
\begin{cases}
\operatorname{Ric}\left(\ii\sin(\hth ) F(h) + \cos(\hth ) \omega\right) = \operatorname{Ric}(\omega)\\
\operatorname{Ric}(\omega) =   \left(\ii\sin(\hth ) F(h) + \cos(\hth ) \omega\right) + \omega,
\end{cases}
\end{align}
of Fano type. 

Hultgren-Witt Nystr\"om (\cite{wytt} Theorem 1.7) showed that the K\"ahler-Einstein coupled equations are solvable on a K\"ahler-Einstein Fano manifold with discrete automorphisms, provided the corresponding decomposition of $c_1(X)>0$ is sufficiently close, in the K\"ahler cone, to a ``parallel" decomposition of the form $c_1(X) = \sum_i (\lambda_i c_1(X))$, $\lambda_i > 0$. By the discussion above, this implies immediately the following existence result for our coupled equations \eqref{coupled_dHYM}. It is convenient to set
$\chi = \ii\sin(\hth ) F(h) + \cos(\hth ) \omega$. 
\begin{cor}\label{CoupledKECor} Suppose $X$ is a del Pezzo surface with discrete automorphism group, and the conditions $[\chi] > 0$, $c_1(X) = [\omega] + [\chi]$ are satisfied. Assume moreover that the classes $[\omega]$, $[\chi]$ are sufficiently close, in the K\"ahler cone, to (positive) multiples of $c_1(X)$. Then there is a solution to our coupled equations \eqref{coupled_dHYM} in the classes $[F(h)]$, $[\omega]$, with coupling constant $\alpha = \sin^2(\hth)$.
\end{cor}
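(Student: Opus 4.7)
The plan is to reduce the claim to a direct application of \cite{wytt} Theorem 1.7, via the dictionary set up in Section \ref{RicciSubSec} between the coupled dHYM equations on a surface and the coupled K\"ahler-Einstein system \eqref{CoupledKE} of Hultgren-Witt Nystr\"om.

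First I would verify that the hypotheses of the corollary match the reduction to Ricci curvature carried out above. Since $\dim_{\C} X = 2$, the argument leading to \eqref{RicciEquation2d} applies with $h \equiv 0$; with the specific choices $\lambda = 1+\cos(\hth)$ and $\alpha = \sin^2(\hth)$, the system \eqref{RicciEquation2d} is literally the coupled K\"ahler-Einstein system \eqref{CoupledKE} in the two K\"ahler classes $[\omega]$ and $[\chi]$. The cohomological hypothesis $c_1(X) = [\omega] + [\chi]$ is exactly what is needed for \eqref{CohoMA} to hold with these values, while the positivity hypothesis $[\chi] > 0$ guarantees that $\chi$ represents a bona fide K\"ahler class in the sense of \cite{wytt}.

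Next, I would invoke \cite{wytt} Theorem 1.7. The del Pezzo hypothesis gives $c_1(X) > 0$, and the discrete automorphism assumption matches the hypothesis of that theorem; such surfaces are known to admit a K\"ahler-Einstein metric by Tian's theorem. The ``parallel'' decomposition $c_1(X) = t\, c_1(X) + (1-t)\, c_1(X)$, $t \in (0,1)$, then admits an immediate solution of \eqref{CoupledKE} coming from the K\"ahler-Einstein metric, and the closeness assumption of the corollary places $([\omega],[\chi])$ in the open neighbourhood where \cite{wytt} Theorem 1.7 produces a genuine solution $(\omega,\chi)$ of \eqref{CoupledKE}. Translating back via $\chi = \ii\sin(\hth) F(h) + \cos(\hth)\omega$, and rescaling as in Remark \ref{scalingRmk} if necessary to land in the integral class $c_1(L)$, one recovers a solution of the original coupled dHYM system \eqref{coupled_dHYM} with coupling constant $\alpha = \sin^2(\hth)$.

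The main obstacle is essentially bookkeeping rather than analysis: one must check that the closeness hypothesis on $([\omega],[\chi])$ in the K\"ahler cone precisely matches the openness condition in \cite{wytt} near the parallel decomposition, and that the positivity of $\chi$, as well as the compatibility with the prescribed phase $\hth$ determined by the surface formulas \eqref{SurfacePhase}, are preserved throughout. Once these compatibility issues are settled, the existence of a solution is immediate from the quoted result.
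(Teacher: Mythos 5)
Your proposal is correct and follows essentially the same route as the paper: the corollary is obtained by reducing, via the Section \ref{RicciSubSec} discussion with $h\equiv 0$ and the choices $\lambda = 1+\cos(\hth)$, $\alpha=\sin^2(\hth)$, to the coupled K\"ahler--Einstein system \eqref{CoupledKE}, and then invoking \cite{wytt} Theorem 1.7 near the parallel decomposition of $c_1(X)$. The paper states this as an immediate consequence of the preceding discussion; your write-up simply makes the bookkeeping (the match between \eqref{CohoMA} and $c_1(X)=[\omega]+[\chi]$, the positivity of $[\chi]$, and the closeness hypothesis) explicit.
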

As the phase $e^{\ii \hth}$ depends on $[F(h)]$, $[\omega]$ (through explicit formulae which we give below in \eqref{SurfacePhase}), the conditions appearing in this Corollary are nonlinear constraints in these cohomology classes. To obtain examples in which they are satisfied we consider the choices
\begin{align*}
&[\omega] = \frac{1}{2} c_1(X) + t\eta,\\
&\ii[F(h)] = \frac{1}{2} c_1(X) - t\eta
\end{align*} 
where $\eta$ is a fixed class satisfying $\int c_1(X) \cup \eta = 0$ and the real parameter $t$ is sufficiently small. Then $[\omega]$, $\ii[F(h)]$ are positive and by \eqref{SurfacePhase} we have $\cos(\hth) = 0$, $\sin(\hth) = 1$. Since clearly $[\omega] + \ii[F(h)] = c_1(X)$, we do obtain solutions to the coupled equations \eqref{coupled_dHYM} in these classes, for all sufficiently small $t$.
\subsection{Duality} We have seen that in special cases our coupled equations on surfaces reduce to the coupled K\"ahler-Einstein equations \eqref{CoupledKE}, that is, setting 
\begin{equation*}
\chi = \ii\sin(\hth ) F(h) + \cos(\hth ) \omega,
\end{equation*} 
we obtain the equations
\begin{align*} 
\begin{cases}
\operatorname{Ric}(\chi) = \operatorname{Ric}(\omega)\\
\operatorname{Ric}(\omega) =  \chi + \omega.
\end{cases}
\end{align*}
We observe that these are now symmetric in $\chi$, $\omega$, so that the term $\chi$ involving the dHYM connection curvature $F(h)$ is interchanged with the variable K\"ahler form $\omega$. It could be interesting to interpret this duality in terms of the origin of the dHYM equation in the $B$-model. 
\subsection{Futaki invariant}
We find a first obstruction to the existence of solutions of the coupled equations \eqref{coupled_dHYM}, which generalises the classical Futaki character \cite{futaki}, following closely the approach of \cite{AGG}. Fix a complex line bundle $L \to X$ and the associated principal $GL(1,\mathbb{C})$-bundle $\mathcal{L}\to X$. 
Let $\mathscr{J}_\mathcal{L}$ be the space of holomorphic structures on $\mathcal{L}$, namely the integrable $GL(1,\mathbb{C})$-equivariant almost complex structures on $\mathcal{L}$, acting as multiplication by $\sqrt{-1}$ on the vertical bundle $V\mathcal{L} \cong \mathcal{L}\times \mathfrak{gl}(1,\mathbb{C}).$ An element $I \in \mathscr{J}_\mathcal{L}$ determines uniquely a complex structure $\check{I}$ on $X$ and a holomorphic structure on $L$. Let $\Aut(\mathcal{L},I)$ denote the group of automorphisms $g$ of the holomorphic principal bundle $(\mathcal{L},I)$, covering an automorphism $\check{g}$ of the complex manifold $(X, \check{I})$.  Each $\zeta \in \operatorname{Lie Aut}(\mathcal{L},I)$ covers a (unique) real holomorphic vector field $\check{\zeta}$ on $(X, \check{I})$. For any symplectic 2-form $\omega$ on $X$, which is $\check{I}$-compatible, we have the Hodge-type decomposition
\begin{equation*}
\check{\zeta} = \eta_{\phi_1} + \check{I}\eta_{\phi_2} + \beta,
\end{equation*}
where $\eta_{\phi_i}$ denotes the Hamiltonian vector field of $\phi_i\in C^{\infty}_0(X,\omega)$, while $\beta$ is the Riemannian dual of an harmonic 1-form, with respect to the metric $\omega( \cdot, \check{I}\cdot)$ (see \cite{lebrun}). Fixing also a Hermitian metric $h$ over the line bundle $L$, we define a $\mathbb{C}$-linear map
\begin{equation*}
\mathcal{F}_I : \operatorname{Lie}\Aut(\mathcal{L},I) \longrightarrow \mathbb{C}
\end{equation*}
given by
\begin{align*}
\mathcal{F}_I( \zeta ) = &\alpha \sqrt{-1} \int_X \theta_h \zeta\im\left(e^{-\ii\hth} (\omega - F(h))^n\right)\\
&- \int_X \phi \left( s(\omega)\omega^n -  \alpha \re\left(e^{-\ii\hth} (\omega - F(h))^n\right) \right) ,
\end{align*}
where $\phi = \phi_1 + \sqrt{-1}\phi_2$. One can show that $\mathcal{F}_I$ is a character of $\operatorname{Lie}\Aut(\mathcal{L},I)$ and does not depend on the choice of $\omega$ and $h$. The proof is essentially the same as in \cite{AGG}, Section 3, up to replacing $\omega^{\textrm{AB}}$ with $\omega^{\textrm{dHYM}}$. Then clearly $\mathcal{F}_I$ must vanish identically if the coupled equations \eqref{coupled_dHYM} have a solution. 
\subsection{Large radius limit} Let us consider the family of K\"ahler forms 
\begin{equation*}
\omega_t = t\omega,\, t\in \R_{>0}.
\end{equation*}
The \emph{large radius limit}, roughly mirror to a large complex structure limit on $\check{M}$, refers to the leading behaviour of the moment maps $\mu_{\widetilde{\G}}$, $\mu_{\alpha}$, computed with respect to $\omega_t$, as $t \to \infty$. The following result is proved in Section \ref{KYMSec}.
\begin{prop}\label{LargeRadiusProp} Let $F(h) = \ii F$, $c = \frac{n[\omega]^{n-1} \cup [F]}{[\omega]^n}$. As $t \to \infty$, there is an expansion
\begin{align*}
&\langle \mu_{\widetilde{\G}}(A), \zeta \rangle \\&= \frac{\ii}{n}\int_X \theta_A(\zeta)\left(\left(- n\omega^{n-1}\wedge F + c \omega^n\right)t^{n-1} + O(t^{n-3})\right) \\& +\frac{1}{n}\int_X p_*(\zeta) \\&\left(\omega^n t^n -\left(\frac{n(n-1)}{2} \omega^{n-2} \wedge F \wedge F - c n\omega^{n-1}\wedge F + \frac{1}{2}c^2\omega^n\right )t^{n-2}+ O(t^{n-4})\right) .    
\end{align*}
As a consequence, the K\"ahler-Yang-Mills coupled equations introduced in \cite{AGG}, in the particular case of line bundles, arise as the large radius limit of the coupled equations \eqref{coupled_dHYM}.
\end{prop}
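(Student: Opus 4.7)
The plan is a direct asymptotic expansion based on Theorem \ref{ExtendedGaugeThm} and formula \eqref{CollinsMomMap}. Writing $F(h)=\ii F$, we have $\omega_t-F(h)=t\omega-\ii F$, and the binomial expansion
\[
(t\omega-\ii F)^n=\sum_{k=0}^n\binom{n}{k}(-\ii)^{k} t^{n-k}\,\omega^{n-k}\wedge F^{k},
\]
combined with the cyclic pattern $(-\ii)^k\in\{1,-\ii,-1,\ii,\ldots\}$, shows that its real part is supported on the powers $t^n,t^{n-2},\ldots$ and its imaginary part on $t^{n-1},t^{n-3},\ldots$.

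From the defining relation $\int_X(\omega_t-F(h))^n\in\R_{>0}\,e^{\ii\hth_t}$, and because $\hth_t\to 0$ as $t\to\infty$, I would then invert $\tan\hth_t=\im/\re$ to obtain $\hth_t=-c/t+O(t^{-3})$, with $c$ as in the statement, whence
\[
e^{-\ii\hth_t}=1+\ii\,c/t-c^2/(2t^2)+O(t^{-3}).
\]
Multiplying this expansion by the polynomial for $(t\omega-\ii F)^n$ and collecting terms, the imaginary part begins at order $t^{n-1}$ with the pointwise form $-n\omega^{n-1}\wedge F+c\omega^n$; the real part begins at $t^n\omega^n$, with the next contribution at order $t^{n-2}$ assembled from three pieces -- the direct binomial term $-\binom{n}{2}\omega^{n-2}\wedge F\wedge F$, the cross term $(c/t)\cdot n t^{n-1}\omega^{n-1}\wedge F$, and the quadratic correction $-(c^2/(2t^2))\cdot t^n\omega^n$. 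These assemble into precisely the expression in the statement, and substituting into the formula of Theorem \ref{ExtendedGaugeThm} yields the claimed expansion of $\langle \mu_{\widetilde{\G}}(A),\zeta\rangle$.

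For the last assertion, I would specialise to $\zeta \in \operatorname{Lie}(\G)$ (so $p_*(\zeta)=0$): the leading $t^{n-1}$ contribution to the equation $\mu_{\widetilde\G}=0$ becomes the pointwise condition $\Lambda_\omega F=c$, i.e. the Hermitian-Einstein equation on $L$ -- the first K\"ahler-Yang-Mills equation of \cite{AGG} for line bundles. The second KYM equation is then extracted from the full moment map $\mu_\alpha$ of Corollary \ref{ExtendedGaugeCor} along the Hamiltonian direction, after rescaling the coupling $\alpha\mapsto \alpha\,t^{-(n-2)}$ so that the Donaldson-Fujiki contribution $-\int p_*(\zeta)\,s(\omega_t)\,\omega_t^n/n!$ and the $t^{n-2}$ piece of the real part of $n\alpha\,\re[e^{-\ii\hth_t}(\omega_t-F(h))^n]$ balance in the limit; their combination gives the second KYM equation, a pointwise relation between $s(\omega)$ and a quadratic expression in $F$.

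The main obstacle is not conceptual but computational: one must track the second-order expansion of $e^{-\ii\hth_t}$ and all cross terms in its product with $(t\omega-\ii F)^n$, so that the three contributions at order $t^{n-2}$ in the real part assemble cleanly into the stated quadratic in $(c,F,\omega)$; and one must identify the correct rescaling of $\alpha$ that produces a nontrivial KYM limit.
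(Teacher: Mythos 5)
Your asymptotic expansion is correct and follows essentially the same route as the paper: the paper expands $z=\int_X(t\omega-\ii F)^n$ and computes $e^{-\ii\hth}=\bar z/|z| = \bigl(1-\tfrac{c^2}{2t^2}\bigr)+\ii\tfrac{c}{t}+O(t^{-3})$ directly, whereas you pass through $\hth_t=-c/t+O(t^{-3})$ and exponentiate; the two are identical to the required order (the parity observation that $\im z/\re z$ contains only odd powers of $t^{-1}$ justifies the $O(t^{-3})$ error in the phase), and the bookkeeping of the three contributions at order $t^{n-2}$ in the real part matches the paper's computation. Substituting into Theorem \ref{ExtendedGaugeThm} then gives the stated expansion, exactly as in the paper.

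The derivation of the K\"ahler--Yang--Mills limit in your last paragraph, however, contains a wrong step: the rescaling $\alpha\mapsto\alpha\,t^{-(n-2)}$ does not produce a nontrivial balance. After dividing the second equation by $\omega_t^n=t^n\omega^n$, the scalar curvature term is $s(\omega_t)=t^{-1}s(\omega)$, while the coupling term is $-\alpha\bigl(1-t^{-2}(\Lambda^2_\omega(F\wedge F)+\tfrac12 c^2)+O(t^{-4})\bigr)$; the factors of $t^{n}$ and $t^{n-2}$ you are comparing both cancel against $\omega_t^n$, so the dimension $n$ plays no role in the scaling. Balancing $t^{-1}s(\omega)$ against $\alpha t^{-2}(\cdots)$ forces $\alpha=\alpha' t$, \emph{linear} in $t$ and independent of $n$, which is what the paper takes. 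Moreover, with $\alpha=\alpha' t$ the constant term $-\alpha=-\alpha' t$ coming from the leading $\omega^n t^n$ of the real part diverges and must be absorbed into the datum, $f=-\alpha' t+\tilde f/t$; your argument omits this. With $\alpha\mapsto\alpha t^{-(n-2)}$ the coupling term would either vanish or the Donaldson--Fujiki term would dominate, and the second KYM equation would not appear in the limit.
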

Indeed, up to higher order terms as $t \to \infty$, the system \eqref{coupled_dHYM}, becomes in the large radius limit
\begin{align*}
\begin{cases}
 n\omega^{n-1}\wedge F = c\omega^n\\
 s(\omega_t) - \alpha \frac{\omega^n t^n -\left(\frac{n(n-1)}{2} \omega^{n-2} \wedge F \wedge F - c n\omega^{n-1}\wedge F + \frac{1}{2}c^2\omega^n\right )t^{n-2}}{(\omega_t)^n} = f,
 \end{cases}
\end{align*} 
or equivalently
\begin{align*}
\begin{cases}
 \Lambda_{\omega} F = c\\
 t^{-1}s(\omega) - \alpha \left(1 - t^{-2} \Lambda^2_{\omega} (F\wedge F) -\frac{1}{2}t^{-2}c^2 \right)  = f.
 \end{cases}
\end{align*} 
Thus choosing the appropriate scaling behaviour for the coupling constant and datum,
\begin{equation*}
\alpha = \alpha' t,\quad f = -\alpha' t + \frac{\tilde{f}}{t}
\end{equation*}
we arrive at the equations
\begin{align}\label{KYMIntro}
\begin{cases}
 \Lambda_{\omega} F = c\\
 s(\omega) +\alpha' \left(\Lambda^2_{\omega} (F\wedge F) + \frac{1}{2} c^2 \right)  = \tilde{f}.
\end{cases}
\end{align} 
When $\tilde{f}$ is a (topologically fixed) constant, these are precisely the \emph{coupled K\"ahler-Yang-Mills equations} studied in \cite{AGG}, in the particular case of a holomorphic line bundle. 
\subsection{Small radius limit} The \emph{small radius limit} refers to the leading behaviour of the moment maps $\mu_{\widetilde{\G}}$, $\mu_{\alpha}$, computed with respect to $\omega_t$, as $t \to 0$.
\begin{prop}\label{SmallRadiusProp} Let $F(h) = \ii F$, $c = \frac{n [\omega]\cup [F]^{n-1}}{[F]^n}$. As $t \to 0$, there is an expansion
\begin{align*}
\langle \mu_{\widetilde{\G}}(A), \zeta \rangle &= \frac{\ii}{n}\int_X \theta_A(\zeta)\left(\left(n\omega\wedge F^{n-1}-c F^n\right)t + O(t^{2})\right) \\& +\frac{1}{n}\int_X p_*(\zeta) \left( F^n + O(t^{3})\right) .    
\end{align*}
\end{prop}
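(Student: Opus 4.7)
The plan is to specialize Theorem~\ref{ExtendedGaugeThm} to the family $\omega_t = t\omega$ and expand every $t$-dependent ingredient as $t\to 0$. Writing $F(A) = \ii F$ with $F \in \mathcal{A}^{1,1}(X,\R)$, the key algebraic move is the factorization
\begin{equation*}
\omega_t - F(A) = t\omega - \ii F = -\ii\,(F + \ii t\omega),
\end{equation*}
so that $(\omega_t - F(A))^n = (-\ii)^n(F + \ii t\omega)^n$, whose binomial expansion has leading term $F^n$ and first correction $\ii n t\,\omega\wedge F^{n-1}$. This factorization is the small-radius analogue of the one implicit in the proof of Proposition~\ref{LargeRadiusProp}.

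Second, I would expand the phase. Setting $\tilde Z(t) := \int_X(F+\ii t\omega)^n$, one has $Z(t) = (-\ii)^n\tilde Z(t)$; define $\tilde\theta_t$ by $e^{\ii\tilde\theta_t} = \tilde Z(t)/|\tilde Z(t)|$. Since $\ii^n\cdot(-\ii)^n=1$, this yields the convenient identity
\begin{equation*}
e^{-\ii\hth_t}(\omega_t - F(A))^n = e^{-\ii\tilde\theta_t}\,(F+\ii t\omega)^n.
\end{equation*}
From $\tilde Z(t) = \int_X F^n + \ii n t\int_X \omega\wedge F^{n-1} + O(t^2)$ and the implicit assumption $\int_X F^n \neq 0$ (built into the very definition of $c$), the standard $\arctan$ expansion gives $\tilde\theta_t = ct + O(t^3)$, and hence $e^{-\ii\tilde\theta_t} = 1 - \ii ct + O(t^2)$.

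Multiplying the two expansions and separating real and imaginary parts yields
\begin{align*}
\re\bigl(e^{-\ii\tilde\theta_t}(F+\ii t\omega)^n\bigr) &= F^n + O(t^2),\\
\im\bigl(e^{-\ii\tilde\theta_t}(F+\ii t\omega)^n\bigr) &= \bigl(n\,\omega\wedge F^{n-1} - c\,F^n\bigr)\,t + O(t^2),
\end{align*}
where the coefficient of $t$ in the second line collects the contributions of $\cos\tilde\theta_t\cdot\im(F+\ii t\omega)^n$ and $-\sin\tilde\theta_t\cdot\re(F+\ii t\omega)^n$ at leading order. Substituting these into the formula for $\langle\mu_{\widetilde\G}(A),\zeta\rangle$ from Theorem~\ref{ExtendedGaugeThm} gives the stated expansion.

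The main obstacle is purely bookkeeping: one must track carefully how the phase correction $e^{-\ii\tilde\theta_t}$ interacts with the binomial expansion of $(F + \ii t\omega)^n$ to the required order. There is no new analytic content, and the argument is a direct transcription of the proof of Proposition~\ref{LargeRadiusProp}, with the small parameter now being $t$ itself rather than $t^{-1}$.
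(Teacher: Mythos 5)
Your proposal is correct and follows essentially the same route as the paper: expand $\int_X(t\omega-\ii F)^n=(-\ii)^n\big([F]^n+\ii nt\,[\omega]\cup[F]^{n-1}+O(t^2)\big)$, extract the phase, and multiply back against the binomial expansion of the form; your factorization through $\tilde Z$ and $\tilde\theta_t$ just cancels the $(-\ii)^n$ and $\ii^n$ factors a step earlier than the paper does. Note that your conclusion $\re\big(e^{-\ii\hth}(t\omega-\ii F)^n\big)=F^n+O(t^2)$ is exactly what the paper's own computation yields (the $O(t^3)$ in the statement of the Proposition is not justified by either argument, since the $t^2$ coefficient $-\binom{n}{2}\omega^2\wedge F^{n-2}-\tfrac{c^2}{2}F^n+cn\,\omega\wedge F^{n-1}$ need not vanish), and both arguments share the implicit normalization $\int_X F^n>0$.
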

Proposition \ref{SmallRadiusProp} is proved in Section \ref{KYMSec}. Thus, up to higher order terms as $t \to 0$, the system \eqref{coupled_dHYM}, becomes in the small radius limit
\begin{align*}
\begin{cases}
 n\omega \wedge F^{n-1} = c F^n\\
 s(\omega_t) - \alpha \frac{F^n}{(\omega_t)^n} = f,
 \end{cases}
\end{align*} 
or equivalently, provided $F$ is a K\"ahler form,
\begin{align*}
\begin{cases}
 \Lambda_{F} \omega = c\\
 t^{-1}s(\omega) - \alpha \frac{F^n}{\omega^n t^n}  = f.
\end{cases}
\end{align*} 
With the appropriate scaling behaviour
\begin{equation*}
\alpha = t^{n-1}\alpha',\quad f = t^{-1}\tilde{f}
\end{equation*}
we arrive at the system
\begin{align}\label{SmallRadiusEqu}
\begin{cases}
\Lambda_{F} \omega = c\\
s(\omega) - \alpha' \frac{F^n}{\omega^n}  = \tilde{f}.
\end{cases}
\end{align}
This comprises the \emph{J-equation} of Donaldson \cite{donaldsonJ} and Chen \cite{chen}, well-known to be a scaling limit of the dHYM equation (see e.g. \cite{collinsXie}). However, unlike the large radius limit, it seems that the system \eqref{SmallRadiusEqu} does not appear in the literature, except for the case when $X$ is a complex surface. In that case setting
\begin{equation*}
\chi = c F - \omega
\end{equation*}
we may rewrite \eqref{SmallRadiusEqu} as the system
\begin{align*} 
\begin{cases}
 \chi^2 = \omega^2\\
 s(\omega) - \frac{\alpha'}{c^2}\Lambda_{\omega}\chi = \tilde{f} + \frac{\alpha'}{c^2}.
 \end{cases}
\end{align*}
In the particular case when $\tilde{f}$ is a constant this comprises a complex Monge-Amp\`ere equation coupled to a twisted cscK equation, and it is precisely of the type studied by Datar and Pingali \cite{datar}.
 
\subsection{The equations on abelian varieties} After establishing the general results described above, in the rest of the paper we focus on studying the coupled equations \eqref{coupled_dHYM}, and their scaling limits, when $X$ is an abelian variety. Note that in this special case the equations \eqref{coupled_dHYM} for constant $f$ are always solvable by taking constant coefficients representatives, so in fact it is necessary here to include a suitable non-constant source term. 

Considering abelian varieties is motivated in part by the origin of the dHYM equations in the $B$-model: for example, homological mirror symmetry for abelian varieties has been studied in detail \cite{fukaya, polish}; moreover abelian varieties also play a special role in this context as fibres of holomorphic Lagrangian fibrations (see e.g. \cite{gross}). 

A more analytic reason is that the coupled equations \eqref{coupled_dHYM} interact nicely with the theory of the scalar curvature of K\"ahler metrics on complex tori, or more generally of periodic solutions of Abreu's equation, as developed by Feng and Sz\'ekelyhidi \cite{fengSz} (see also \cite{lejmiSz}). This is exploited in our results Theorems \ref{MainThmSurface}, \ref{MainThmLargeRadius} and \ref{MainThmODE}.   

Further motivation comes from the fact that the problem of realising solutions of the coupled equations \eqref{coupled_dHYM} effectively in the $B$-model, as in Remark \ref{realizRmk}, is more tractable when $X$ is a complex torus. We explain this, in a special case, in Proposition \ref{HKProp}. 

We can now discuss our existence results on abelian varieties. As in the work of Feng and Sz\'ekelyhidi \cite{fengSz} we may assume, without loss of generality, that $X$ is the abelian variety $\C^n/(\Z^n\oplus \sqrt{-1}\Z^n)$ and $[\omega_0]$ is the class of the constant metric $\omega_0 =  \sqrt{-1} \sum_i dz_i \wedge \overline{dz}_i$. The group $U(1)^n$ acts on $X$, by translations in the direction $\sqrt{-1}\R^n$. We will restrict to $U(1)^n$-invariant tensors and thus work effectively over the real torus $T = \R^n/\Z^n$. Note that an invariant K\"ahler metric $g$ is given by the real Hessian of a convex function $v(y)$ on $\R^n$ of the form
\begin{equation*}
v(y) = \frac{1}{2}|y|^2 + \psi(y)
\end{equation*}
where $\psi\!:\R^n \to \R$ is $\Z^n$-periodic (so we have $\omega_g = \sqrt{-1} \sum_{i,j}  v_{ij}dz_i\wedge \overline{dz}_j$). Such a function has a well-defined \emph{Legendre transform} $u(x)\!:\R^n \to \R$, where the ``symplectic coordinates" $x$ and ``real holomorphic" coordinates $y$ are related by the diffeomorphism $y = \nabla u(x)$.

We begin by studying the special case when $X$ is an \emph{abelian surface}. As a preliminary step we derive a priori estimates for invariant solutions of \eqref{coupled_dHYM}, under a semipositivity condition, and a condition on the phase $e^{\ii\hth}$. These rely heavily on the results of \cite{fengSz} and may be of independent interest.

As before, it is convenient to set $F(h) = \ii F$, for a real $2$-form $F$, and to formulate our results in terms of $F$.  
\begin{prop}\label{aprioriPropSurfaces} Let $X$ be an abelian surface, and $(\omega_g, F)$ be a $U(1)^2$-invariant solution of the coupled equations \eqref{coupled_dHYM}, for a fixed function $f$. Suppose we have $F \geq 0$, and the phase $e^{\ii\hth}$ satisfies
\begin{equation*}
\sin(\hth) < 0,\, \cos(\hth) > 0.
\end{equation*} 
Then there are a priori $C^k$ estimates on $(g, F)$ of all orders, with respect to the background flat metric $\omega_0$, depending only on $f$, the phase $e^{\ii\hth}$, and the coupling constant $\alpha$. Moreover the metric $g$ is uniformly positive, depending on $\sup |f|$, $\alpha$ and $e^{\ii \hth}$.
\end{prop}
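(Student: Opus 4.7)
\emph{Sketch.} Under the sign hypotheses, the $(1,1)$-form $\chi := -\sin(\hth) F + \cos(\hth)\omega$ is strictly positive, being a positive linear combination of $\omega > 0$ and $F \geq 0$. On a complex surface, a direct expansion (essentially as in the derivation of \eqref{RicciEquation2d}) shows that the dHYM equation is equivalent to the complex Monge-Amp\`ere equation $\chi^2 = \omega^2$. In $U(1)^2$-invariant coordinates on the abelian surface, this translates into a real Monge-Amp\`ere equation on $\R^2$ for the convex potentials of $\omega$ and $\chi$.

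Next I would extract algebraic a priori bounds. Dividing $\chi^2 = \omega^2$ by $\omega^2$ and using $2 F\wedge \omega / \omega^2 = \Lambda_\omega F$ on a surface gives the identity
\begin{equation*}
\det(\omega^{-1}F) = 1 + \cot(\hth) \Lambda_\omega F.
\end{equation*}
Since $F \geq 0$ forces $\det(\omega^{-1}F) \geq 0$ and $\cot(\hth) < 0$, this yields $0 \leq \Lambda_\omega F \leq |\tan(\hth)|$, and in fact both eigenvalues of $\omega^{-1}F$ lie in the fixed interval $[0, |\tan(\hth)|]$. A parallel computation starting from $\prod_i (1 + \ii \lambda_i) = r_\omega(h)\, e^{\ii \hth}$ gives the algebraic identity $r_\omega(h) = \Lambda_\omega F/|\sin(\hth)|$, hence $r_\omega(h) \leq 1/|\cos(\hth)|$. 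Substituting into the second equation of \eqref{coupled_dHYM} bounds $s(\omega)$ uniformly in terms of $\sup|f|$, $\alpha$, $|\cos(\hth)|$.

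With $s(\omega)$ controlled, I would pass to symplectic coordinates on $T^2 = \R^2/\Z^2$ via the Legendre transform, as in \cite{fengSz}. The K\"ahler metric is encoded by a symplectic potential $u$ satisfying Abreu's equation with uniformly bounded right-hand side, and the a priori estimates of Feng-Sz\'ekelyhidi then give both uniform positivity of $g$ and $C^k$ estimates on $u$ with respect to the flat metric, of all orders. Finally, the real Monge-Amp\`ere equation $\chi^2 = \omega^2$ is uniformly elliptic (by the eigenvalue bounds on $\omega^{-1}F$) with uniformly $C^k$-bounded coefficients (by the control just obtained on $\omega$); Caffarelli-type regularity then propagates $C^k$ control from $\omega$ to $\chi$, hence to $F$, and via standard elliptic regularity for $\ii\del\delbar$ to the Hermitian metric $h$.

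The substantive content of the proposition is the short algebraic reduction that reveals a uniform bound on $r_\omega(h)$ purely in terms of the cohomologically fixed phase $\hth$, independent of the K\"ahler metric, and so decouples the two equations at the level of a priori estimates. Once this is in place, the hard analytic work is done by the Feng-Sz\'ekelyhidi framework; the only care needed is in verifying that the specific $U(1)^2$-invariant situation considered here falls within the scope of their estimates for periodic solutions of Abreu's equation.
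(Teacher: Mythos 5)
Your proposal is correct and follows essentially the same route as the paper: the algebraic identities you derive from the surface dHYM equation (equivalently, the bounds $F^2/\omega^2 < 1$ and $2F\wedge\omega/\omega^2 < |\tan(\hth)|$, which give the uniform bound on the radius and hence on $s(\omega)$) are exactly the paper's key step, after which both arguments invoke the Feng--Sz\'ekelyhidi determinant estimate, the Trudinger--Wang/Caffarelli regularity theory for the symplectic potential, and the reduction of dHYM to the complex Monge--Amp\`ere equation $\chi^2=\omega^2$ to control $F$ and $h$. The only cosmetic difference is that you phrase the decoupling via the identity $r_\omega(h)=\Lambda_\omega F/|\sin(\hth)|$ rather than directly bounding the real part of $e^{-\ii\hth}(\omega-F(h))^2$, and you get the estimates on $\chi$ from pointwise eigenvalue bounds and uniform ellipticity where the paper cites Yau's estimates.
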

\begin{rmk} The necessary conditions that the class $[F]$ is (semi)positive and that we have $\sin(\hth) < 0,\, \cos(\hth) > 0$ are indeed compatible. A straightforward computation shows that on a complex surface we have
\begin{align}\label{SurfacePhase}
&\nonumber \cos(\hth) = \frac{\int \omega^2 - F^2}{\left(\left(\int \omega^2 - F^2\right)^2 + 4\left(\int F \wedge \omega \right)^2\right)^{1/2}},\\
&\sin(\hth) = -\frac{2  \int F \wedge \omega}{\left(\left(\int \omega^2 - F^2\right)^2 + 4\left(\int F \wedge \omega \right)^2\right)^{1/2}},
\end{align}
so it is enough to choose a positive class $[F]$ with smaller volume than $[\omega]$.
\end{rmk}

Proposition \ref{aprioriPropSurfaces} is proved is Section \ref{surfaceSec}. Similarly to the work of Feng-Sz\'ekelyhidi \cite{fengSz} in the case of the scalar curvature, using the Legendre transform we can apply Proposition \ref{aprioriPropSurfaces} to obtain an existence result for the coupled equations \eqref{coupled_dHYM} on an abelian surface $X$. Let $A$ be any $U(1)^2$-invariant function on $X$, satisfying the necessary cohomological condition
\begin{equation*}
\int_{X} A \frac{\omega^2}{2} = - \alpha \int_X \re\left(e^{-\ii\hth} (\omega - F(h))^2\right).
\end{equation*}
\begin{thm}\label{MainThmSurface} Let $X$ be an abelian surface with K\"ahler classes $[\omega]$, $[F]$. Suppose the phase $e^{\ii\hth}$ satisfies
\begin{equation*}
\sin(\hth) < 0,\, \cos(\hth) > 0.
\end{equation*}
Consider the equations \eqref{coupled_dHYM}, with coupling constant 
\begin{equation*}
\alpha = \alpha' \cos(\hth),\,\alpha'>0 
\end{equation*}
and datum $f$ given by the image of any function $A$, as above, under Legendre duality, that is
\begin{equation*}
f(\nabla u(x)) = A(x).
\end{equation*}  
Then, these are solvable provided the classes $[F]$ and $[\omega]$ are sufficiently close, depending only on $\alpha'$ and $\sup|A|$.
\end{thm}
Theorem \ref{MainThmSurface} is proved is Section \ref{surfaceSec}. The following application follows at once, by rescaling suitably (recall Remark \ref{scalingRmk}). 
\begin{cor} Fix negative line bundles $L, N$ on the abelian surface $X$. Then for all sufficiently large $k$, depending only on $\alpha'$, $A$, the equations \eqref{coupled_dHYM} with coupling constant $\alpha' \cos(\hth) $ and datum $f$ as in Theorem \ref{MainThmSurface} are solvable on the line bundle $L^{\otimes k}\otimes N^{-1}$, with respect to the K\"ahler class $-k c_1(L)$. 
\end{cor}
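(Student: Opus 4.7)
The plan is to apply Theorem \ref{MainThmSurface} directly to the bundle $M_k := L^{\otimes k} \otimes N^{-1}$ equipped with the K\"ahler class $[\omega_k] = -kc_1(L)$, verifying its hypotheses asymptotically in $k$. Write $a := -c_1(L)$ and $b := -c_1(N)$, both K\"ahler classes since $L$ and $N$ are negative. Then $-c_1(M_k) = ka - b$ lies in the K\"ahler cone for all sufficiently large $k$, because the K\"ahler cone is open and contains $a$; hence $M_k$ is a negative line bundle for $k \gg 0$.

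For the phase $e^{\ii\hth_k}$ we invoke Remark \ref{scalingRmk}: the formulae \eqref{SurfacePhase} are invariant under simultaneous rescaling of $\omega$ and $F$, so one may equivalently compute using the rescaled classes $[\omega'] = a$ and $[F'] = a - b/k$. On the abelian surface the intersection numbers $\langle a, a\rangle$ and $\langle a, b\rangle$ of K\"ahler classes are strictly positive. Writing $F(h) = \ii F$ and expanding in $1/k$, a direct computation from \eqref{SurfacePhase} yields
\begin{align*}
\cos(\hth_k) &= \frac{(2/k)\langle a, b\rangle + O(1/k^2)}{2\langle a, a\rangle + O(1/k)} \to 0^+,\\
\sin(\hth_k) &= \frac{-2\langle a, a\rangle + O(1/k)}{2\langle a, a\rangle + O(1/k)} \to -1,
\end{align*}
as $k \to \infty$.

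Given the fixed data $\alpha' > 0$ and $A$, Theorem \ref{MainThmSurface} provides a constant $c = c(\alpha', A) > 0$ such that its conclusion holds whenever $\sin(\hth) < 0$ and $0 < \cos(\hth) < c$. By the asymptotics above, both conditions are met for all $k \geq k_0$, with $k_0 = k_0(\alpha', A)$. Theorem \ref{MainThmSurface} then applies to $M_k$ with K\"ahler class $-kc_1(L)$ and coupling $\alpha' \cos(\hth_k)$, producing the claimed solution.

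The key point to watch is the sign of $\cos(\hth_k)$: the approach to $0$ is from above precisely because $\langle a, b\rangle > 0$, i.e.\ because the two K\"ahler classes $-c_1(L)$ and $-c_1(N)$ have strictly positive intersection on the abelian surface. This keeps us on the correct sheet of the phase constraints imposed by Theorem \ref{MainThmSurface}, and is really the only substantive step of the argument; the rest is packaging and a routine check that the normalisation of the datum $A$ (through the cohomological compatibility of Theorem \ref{MainThmSurface}) can be adjusted consistently as $k$ varies.
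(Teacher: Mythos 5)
Your proposal is correct and is essentially the paper's argument: the authors simply note that the phase conditions of Theorem \ref{MainThmSurface} hold when $-c_1$ of the bundle is close to $[\omega]$ with $[\omega]+c_1$ K\"ahler, and say the corollary ``follows at once, by rescaling suitably'' via Remark \ref{scalingRmk} --- which is exactly the rescaling to $[\omega']=a$, $[F']=a-b/k$ and the asymptotic check $\cos(\hth_k)\to 0^+$, $\sin(\hth_k)\to -1$ that you carry out. Your expansion, and in particular the observation that positivity of $\langle a,b\rangle$ forces $\cos(\hth_k)$ to approach $0$ from above, correctly fills in the details the paper leaves implicit.
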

Theorem \ref{MainThmSurface} suggests a similar approach, based on the positivity of $L^{-1}$, in the case of the large radius limit, that is, of the K\"ahler-Yang-Mills equations studied in \cite{AGG}, 
\begin{align}\label{prescribedKYM}
\begin{cases}
\Lambda_g F = \mu\\
s(g) + \alpha \Lambda^2_g ( F  \wedge F ) = f,  
\end{cases}
\end{align}
for a smooth function $f$. Indeed it turns out that in this case we can obtain analogues of Proposition \ref{aprioriPropSurfaces} and Theorem \ref{MainThmSurface}, for arbitrary dimension. Following \cite{fengSz}, as above, we may assume, without loss of generality, that $X$ is the abelian variety $\C^n/(\Z^n\oplus \sqrt{-1}\Z^n)$ and $[\omega_0]$ is the class of the metric $\omega_0 =  \sqrt{-1} \sum_i dz_i \wedge \overline{dz}_i$. 
\begin{prop}\label{aprioriProp} Let $(g, h)$ be a $U(1)^n$-invariant solution of \eqref{prescribedKYM} on a line bundle $L\to M$, for a fixed function $f$. Suppose we have $F\geq 0$. Then there are a priori $C^k$ estimates on $(g, h)$ of all orders, with respect to the background flat metric $\omega_0$, depending only on $f$, the dimension $n$, the degree $\mu$ of $L^{-1}$, and the coupling constant $\alpha$. Moreover the metric $g$ is uniformly positive, depending only on $\sup |f|$, $\alpha$ and $\mu$.
\end{prop}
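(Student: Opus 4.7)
The plan is to exploit the semipositivity $F\geq 0$ together with the first equation $\Lambda_g F=\mu$ to reduce the second equation to a prescribed scalar curvature equation with \emph{uniformly bounded} right-hand side, and then to import the a priori estimates of Feng--Sz\'ekelyhidi \cite{fengSz} for $U(1)^n$-invariant K\"ahler metrics on abelian varieties. Once the metric $g$ is controlled, estimates on the Hermitian metric $h$ on $L$ follow from the fact that the first equation is a linear elliptic equation in a potential for $F$.

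First, I would pass to eigenvalues. Let $\lambda_1,\ldots,\lambda_n$ denote the eigenvalues of the endomorphism $g^{-1}F$. The assumption $F\geq 0$ gives $\lambda_i\geq 0$, while the first equation forces $\sum_i\lambda_i=\mu$, so
\[
0\leq \lambda_i\leq \mu\quad\text{pointwise},
\]
and consequently
\[
\Lambda^2_g(F\wedge F)\;=\;2\sum_{i<j}\lambda_i\lambda_j\;=\;\mu^2-\sum_i\lambda_i^2\;\in\;\bigl[\tfrac{n-1}{n}\mu^2,\;(n-1)\mu^2\bigr].
\]
The second equation in \eqref{prescribedKYM} then yields the uniform bound
\[
|s(g)|\;\leq\;\|f\|_{L^\infty}+(n-1)\alpha\mu^2,
\]
depending only on the data claimed in the statement. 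In particular the dependence on $F$ has been eliminated from this estimate.

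Next I would apply the a priori estimates of \cite{fengSz}. Working in symplectic coordinates on the real torus via the Legendre transform, the metric $g$ is encoded by the convex symplectic potential $u(x)$, and the scalar curvature equation takes the form of Abreu's equation $-u^{ij}_{ij}=s(g)$ with uniformly bounded right-hand side. This is essentially the framework in which the arguments of \cite{fengSz} give $C^0$ and $C^2$ control on $u$, strict convexity (hence uniform positivity of $g$ with respect to $\omega_0$), and, via Evans--Krylov type arguments and bootstrap, $C^k$ bounds of all orders.

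Finally, to obtain $C^k$ estimates on the Hermitian metric $h$ (equivalently, on $F$), I would write $F=F_0+\sqrt{-1}\partial\bar\partial\phi$ for a fixed $U(1)^n$-invariant constant-coefficient representative $F_0$ of $-c_1(L)$ and a $U(1)^n$-invariant potential $\phi$ normalized to have zero average. The first equation reduces to the linear elliptic equation
\[
\Delta_g\phi\;=\;\mu-\Lambda_g F_0,
\]
whose coefficients and right-hand side are $C^k$-bounded by the previous step, and standard elliptic regularity then gives $C^{k}$ control on $\phi$ (and hence on $F$) in terms of the same constants.

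The main obstacle will be the second step. The results of \cite{fengSz} are formally phrased for a prescribed scalar curvature of the form $s(g)=A(\nabla u(x))$ for a fixed function $A$, whereas here the right-hand side of Abreu's equation is determined by the coupled equations themselves and is only \emph{a priori bounded} thanks to the first step. One must therefore verify carefully that each step of their argument---in particular the $C^0$ estimate on the symplectic potential, which relies on integration by parts against convex test functions, and the $C^2$ estimate---goes through when the scalar curvature is merely uniformly bounded in $L^\infty$, rather than strictly prescribed. The semipositivity $F\geq 0$ is what makes this possible: without it, $\Lambda^2_g(F\wedge F)$ could be large and of indefinite sign, and no pointwise bound on $s(g)$ would be available.
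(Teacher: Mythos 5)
Your proposal follows essentially the same route as the paper: the pointwise eigenvalue bound $0\leq\lambda_i\leq\mu$ coming from $F\geq 0$ together with $\Lambda_g F=\mu$, the resulting reduction of the second equation to Abreu's equation with uniformly bounded right-hand side, the Feng--Sz\'ekelyhidi determinant estimate combined with linearised Monge--Amp\`ere regularity (Trudinger--Wang, then Caffarelli, then bootstrap) for the metric, and finally a linear elliptic equation for the bundle potential. The obstacle you flag in your last paragraph is resolved exactly as you anticipate---the determinant pinch of \cite{fengSz} depends only on $n$ and the sup-norm of the right-hand side, so merely $L^\infty$ control suffices to start the chain---and the only slip is your stated range for $\Lambda^2_g(F\wedge F)$, whose lower endpoint should be $0$ (attained when all but one $\lambda_i$ vanish) and whose upper endpoint should be $\tfrac{n-1}{n}\mu^2$ rather than $(n-1)\mu^2$; this does not affect the uniform bound on $s(g)$.
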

 Let $A$ be any $U(1)^n$-invariant function on $X$, satisfying 
\begin{equation*}
\int_{M} A \frac{\omega^n}{n!} = \lambda \frac{[\omega]^n}{n!},
\end{equation*}
where the topological constant $\lambda$ is given by
\begin{equation*}
\lambda = \alpha n (n-1) \frac{(c_1(L))^2 \cup [\omega]^{n-2}}{[\omega]^n}.
\end{equation*} 
Suppose $[\omega]$ is the class of the curvature of an ample line bundle $N$ on $M$. We also fix a negative line bundle $L$, with degree $-\mu$.
\begin{thm}\label{MainThmLargeRadius} There exists an integer $K > 0$ such that, for all $k \geq K$, there are a Hermitian metric $h$ on the fibres of $L^n \otimes (k-1) \mu N$ and an invariant K\"ahler metric $g \in c_1(N)$, solving the equations 
\begin{align*}
\begin{cases}
 \Lambda_g F = n k \mu\\
 s(g) + \frac{\alpha}{(n k)^2} \Lambda^2_g ( F \wedge F ) = f,
 \end{cases}
\end{align*}   
where $f$ is the image of any function $A$, as above, under Legendre duality, that is
\begin{equation*}
f(\nabla u(x)) = A(x).
\end{equation*}
The integer $K$ depends only on $\sup |f|$, the dimension $n$, the degree $\mu$, and the coupling constant $\alpha$.
\end{thm}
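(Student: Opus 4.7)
The plan is to run a continuity method in the coupling parameter $t \in [0,1]$ with $\alpha_t = t\alpha$, combining Proposition \ref{aprioriProp} with the solution of the prescribed scalar curvature equation on tori from \cite{fengSz}, in the spirit of the proof of Theorem \ref{MainThmSurface}. Under $U(1)^n$-invariance the problem reduces to a PDE on the symplectic torus $T^n = \R^n/\Z^n$: the K\"ahler metric $g$ is encoded by a symplectic potential $u\!:\R^n\to\R$ with $\Z^n$-periodic Hessian, and the Legendre relation $f(\nabla u(x)) = A(x)$ transports the prescribed datum to the fixed function $A$ on the symplectic torus. For $k$ sufficiently large, the rescaled class $c_1(L)/k + ((k-1)\mu/(nk))[\omega]$ lies close to the K\"ahler class $(\mu/n)[\omega]$, so one can take a curvature representative $F > 0$, and the hypothesis of Proposition \ref{aprioriProp} is met throughout the continuity path.

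At $t = 0$ the coupled system decouples into the prescribed scalar curvature equation $s(g) = f$ and the Hermitian-Einstein equation $\Lambda_g F(h) = n k \mu$. The scalar curvature equation, under $U(1)^n$-invariance, is solved by Feng-Sz\'ekelyhidi \cite{fengSz} with source $A$, given the cohomological compatibility $\int_M A\, \omega^n/n! = \lambda [\omega]^n/n!$ assumed in the statement. Given such $g_0$, the Hermitian-Einstein equation is a linear Poisson problem for the conformal factor of $h$, solvable by cohomological averaging, which produces the initial solution $h_0$. For openness at each $t_0 \in [0,1]$ I would apply the implicit function theorem: the linearization on invariant functions modulo constants is a fourth-order elliptic Fredholm operator, and vanishing of its cokernel should follow from a perturbation of the Feng-Sz\'ekelyhidi analysis of the linearized Abreu operator, since the coupled contribution is a lower order correction. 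Closedness follows directly from Proposition \ref{aprioriProp}, which yields uniform $C^k$ estimates and a strict positive lower bound on $g$ depending only on $\sup|f|$, $n$, $\mu$, and $\alpha$ (in particular uniformly in $t$); these pass to uniform bounds on $u$ after a standard gauge fixing such as $u(0) = 0$, $\nabla u(0) = 0$ via the Legendre transform.

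The threshold integer $K$ emerges as the smallest $k$ for which $F > 0$ and the estimates of Proposition \ref{aprioriProp} apply uniformly along the path. The main obstacle I anticipate is the openness step, in particular the vanishing of the cokernel of the linearization of the coupled operator: the auxiliary dependence $h = h(g)$ imposed by the Hermitian-Einstein equation feeds back into the principal PDE through the term $\alpha \Lambda_g^2(F\wedge F)/(nk)^2$, so a careful Fredholm analysis is required, although the explicit $1/(nk)^2$ suppression should make the coupled term a genuine perturbation for $k$ large. A subsidiary technical difficulty, already present in Theorem \ref{MainThmSurface}, is faithfully transferring the a priori estimates of Proposition \ref{aprioriProp} between holomorphic and symplectic coordinates via the Legendre transform.
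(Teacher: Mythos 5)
There are two genuine gaps in your proposal, both at the points you yourself flag as the delicate ones. First, your openness argument rests on the claim that the $1/(nk)^2$ prefactor makes the coupling term a small perturbation of the Abreu operator for large $k$; this is false, and the paper explicitly warns against it: since $F$ lives in (a rescaling of) $c_1(L^n\otimes(k-1)\mu N)$ one has $\int_M \Lambda^2_g(F\wedge F)\,\omega_g^n/n! = (nk)^2\lambda = O(k^2)$, so $\frac{\alpha}{(nk)^2}\Lambda^2_g(F\wedge F)$ is $O(1)$ and the effective coupling is $\alpha$, not $\alpha/k^2$. Consequently the linearization is not a perturbation of the Feng--Sz\'ekelyhidi linearized Abreu operator for any $k$, and the paper has to devote its entire Appendix to a direct integration-by-parts proof that the full linearized operator (including the feedback of $\dot\varphi(\dot u)$ through the linearized HYM equation \eqref{LinCond}) is formally self-adjoint with respect to Lebesgue measure and satisfies $\int\gamma L(\gamma)\le 0$ with equality only for $\gamma=0$. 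Second, you assert that $F>0$ (hence the hypothesis $F\ge 0$ of Proposition \ref{aprioriProp}) ``is met throughout the continuity path'' because the cohomology class is K\"ahler for large $k$; positivity of the class does not give positivity of the particular representative produced by the path, and showing that $F>0$ is a \emph{closed} condition is exactly where the threshold $K$ comes from. The paper's mechanism is to work on the $\Q$-line bundle $(1-\beta)L+\frac{\beta}{n}N$, writing $F_{ij}=(1-\beta)\tilde F_{ij}+\beta\frac{\mu}{n}v_{ij}$: if positivity degenerated in the limit one would get $\Hes(\varphi)(\xi)=-B(\xi)-\frac{\beta}{1-\beta}\frac{\mu}{n}\Hes(v)(\xi)$ for a unit vector $\xi$, which for $\beta=1-1/k$ close to $1$ contradicts the uniform Schauder bound on $\|\varphi\|_{C^2}$ coming from the HYM equation together with the uniform lower bound on $\Hes(v)$ from Proposition \ref{aprioriProp}. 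None of this appears in your argument, and without it neither the closedness step nor the origin of $K$ is established.

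A smaller but real inconsistency: your path $\alpha_t=t\alpha$ with the datum $A$ held fixed has no solution at $t=0$, because the constraint for the decoupled equation $s(g)=f$ on a torus is $\int_\Omega A\,d\mu=0$, whereas the theorem's hypothesis imposes $\int_\Omega A\,d\mu=\lambda[\omega]^n/n!$ with $\lambda=\alpha n(n-1)\frac{(c_1(L))^2\cup[\omega]^{n-2}}{[\omega]^n}$, generically nonzero. The paper instead keeps $\alpha$ fixed and deforms only the source, $A_t=tA+(1-t)\int_\Omega A\,d\mu$ as in \eqref{ContPathSymp}, starting at $t=0$ from the explicit flat solution $v=\frac12|y|^2$ with constant $F$; this keeps the cohomological constraint satisfied for every $t$. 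Your decoupling idea could be repaired by deforming $A$ along with $\alpha$, but as it stands the starting point of your path does not exist.
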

Note that Theorem \ref{MainThmLargeRadius} is not obtained by ``perturbation" around $\alpha = 0$ and the solution of the corresponding problem $s(g) = f$ found by Feng-Sz\'ekelyhidi. Indeed for all $k$ as in the statement we have  
\begin{equation*}
\int_M \Lambda^2_g ( F  \wedge F )\frac{\omega^n_g}{n!} = (n k)^2 \lambda = O(k^2),
\end{equation*}
so the term $\frac{\alpha}{(n k)^2} \Lambda^2_g ( F \wedge F )$ coupling the metric to the connection is of order $O(1)$ and the actual coupling constant is $\alpha$, not $\frac{\alpha}{k^2}$.

Theorems \ref{MainThmSurface} and \ref{MainThmLargeRadius} apply when the datum $f$ (or rather its Legendre transform $A$ with respect to the unknown $g$) is arbitrary. We can obtain much more precise results when $A$ has a particular form. We consider here the case when $A$ is invariant under translations with respect to all but one of the symplectic coordinates, say $x_1$. For simplicity we analyse the case when $X$ is the abelian surface $\C^2/(\Z^2\oplus\ii\Z^2)$, although similar results hold much more generally.
\begin{thm}\label{MainThmODE} Suppose $X$ is the abelian surface $\C^2/(\Z^2\oplus\ii\Z^2)$. Then, the coupled equations \eqref{coupled_dHYM} are solvable on any $L$, with respect to the class $[\omega_0]$, with coupling constant $\alpha >0$, and datum $f(y_1)$ given by the image of any function $A(x_1)$, as above, under Legendre duality, that is
\begin{equation*}
f(\nabla u(x)) = A(x_1).
\end{equation*}
\end{thm}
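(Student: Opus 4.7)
The idea is to exploit the extra translation symmetry of the problem when $A = A(x_1)$ depends on a single symplectic coordinate: one looks for solutions $(\omega_g, h)$ sharing this symmetry, which reduces the coupled PDE system \eqref{coupled_dHYM} to a system of ODEs on $\R/\Z$ that can be integrated more or less explicitly.

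Concretely, I would impose the ansatz that the K\"ahler potential (in the notation of Feng-Sz\'ekelyhidi) takes the form $v(y) = \frac{1}{2}|y|^2 + \psi(y_1)$, which under Legendre duality $y = \nabla u(x)$ corresponds to $u(x) = u_1(x_1) + \frac{1}{2}x_2^2$, with $y_1 = u_1'(x_1)$ and $y_2 = x_2$. Choosing a constant background representative $F^0$ of $-c_1(L)$ that is diagonal in the frame $\{dz_1, dz_2\}$ (which one can always achieve after a linear change of basis on $\C^2$), I would then seek curvature of the form $\ii F = \ii F^0 + \ii \del\delbar \phi$ with $\phi = \phi(y_1)$. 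Both $\omega_g$ and $\ii F$ are then simultaneously diagonal in this frame, with $dz_2 \wedge d\bar z_2$-components constant and $dz_1 \wedge d\bar z_1$-components functions of $y_1$ alone. By construction, the prescribed datum $f(y_1)$ pulls back to $A(x_1) = f(u_1'(x_1))$ under the Legendre transform.

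Under this ansatz, the first equation of \eqref{coupled_dHYM} reduces, as in the surface case recalled before Corollary \ref{CoupledKECor}, to a pointwise algebraic relation between the free component $F_{1\bar 1}(y_1)$ and $g_{1\bar 1}(y_1) = 1 + \psi''(y_1)$, with the phase $e^{\ii\hth}$ determined purely cohomologically by \eqref{SurfacePhase}. Solving this relation at each point allows one to write $F_{1\bar 1}$ as an explicit smooth function of $g_{1\bar 1}$. Substituting this into the second equation of \eqref{coupled_dHYM}, and using Abreu's formula $s(g) = -\bigl(1/u_1''(x_1)\bigr)''$ available in our ansatz, the scalar curvature equation collapses to a single scalar ODE on $\R/\Z$ of the schematic form
\[
-\bigl(1/u_1''(x_1)\bigr)'' - \alpha\, R\bigl(u_1''(x_1)\bigr) = A(x_1) + c_0,
\]
where $R$ is a smooth nonlinear function encoding the radius function after elimination of $F_{1\bar 1}$, and $c_0$ is a cohomological constant.

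The ODE can then be solved by direct integration: the constant $c_0$ is fixed by the cohomological normalization of $A$ already assumed in the hypothesis, and integrating twice with the boundary conditions $u_1'(x_1 + 1) = u_1'(x_1) + 1$ (encoding the K\"ahler class $[\omega_0]$) together with a normalization such as $\int_0^1 u_1(x_1)\,dx_1 = 0$ determines $u_1$. The main technical obstacle, which I expect to dominate the work, is to verify that for \emph{arbitrary} $L$ and arbitrary $A$ with the right average one can guarantee $u_1'' > 0$ uniformly on $\R/\Z$ and that the eliminated $F_{1\bar 1}$ remains smooth and periodic, so that $g$ is a genuine K\"ahler metric and $F$ a bona fide curvature form on $L$; the flexibility afforded by the one-dimensional setting is precisely what lets one bypass the phase-smallness restriction present in Theorem \ref{MainThmSurface}. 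An effective strategy is probably a continuity method in a parameter, starting from a constant-coefficient solution and using a priori estimates along the lines of Proposition \ref{aprioriPropSurfaces} specialised to the ODE setting to close the argument.
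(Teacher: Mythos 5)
Your overall strategy --- impose translation invariance in all but one variable, reduce to a scalar ODE for the Legendre potential, and close the argument with a continuity method --- is the same as the paper's, but there is a genuine gap at your very first step: you cannot assume the constant representative $F^0$ of $-c_1(L)$ is diagonal in the frame $\{dz_1,dz_2\}$. A linear change of basis of $\C^2$ diagonalizing $F^0$ will in general neither preserve the lattice $\Z^2\oplus\ii\Z^2$ (so it changes the abelian surface), nor keep $\omega_0$ in standard form, nor preserve the particular coordinate $x_1$ on which the datum $A$ is required to depend --- and the theorem fixes all three of these. Worse, the diagonal case is precisely the degenerate one in which the coupling disappears: writing $F^0_{11}=a$, $F^0_{12}=b$, $F^0_{22}=c$, if $b=0$ then with $v$ and $F$ simultaneously diagonal the dHYM equation forces $\varphi''$ to be a fixed multiple of $\psi''$, the radius term $\re\bigl(e^{-\ii\hth}\det(v-\ii F)\bigr)/\det(v)$ becomes a constant, and the second equation collapses to Abreu's equation, already solved by Feng--Sz\'ekelyhidi. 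So your ansatz, were it legitimate, would trivialize the theorem rather than prove it.

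The actual content of the paper's proof lives in the off-diagonal entry $b$. Keeping $F^0$ general, the reduced ODE is
\begin{equation*}
-\frac{1}{4}\Bigl(\frac{1}{1+\phi''}\Bigr)'' -\frac{\alpha b^2(1+\phi'')}{\cos(\hth)-c\sin(\hth)} + \frac{\alpha(c^2+1)}{c\sin(\hth)-\cos(\hth)} = A(x_1),
\end{equation*}
and everything hinges on the explicit identity
\begin{equation*}
\frac{\alpha b^2}{\cos(\hth)-c\sin(\hth)} = \frac{b^2}{1+b^2+c^2}\bigl((1-\det F^0)^2+(\tr F^0)^2\bigr)^{1/2}\ \geq\ 0.
\end{equation*}
This sign yields the a priori bound $\phi''<C$ by evaluating the equation at a maximum of $\phi''$ (closedness), and it makes the linearised operator $\beta\mapsto\frac14\beta''-\frac{\alpha b^2(1+\phi'')^2}{\cos(\hth)-c\sin(\hth)}\beta$ formally self-adjoint with trivial kernel (openness). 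Your fallback appeal to ``a priori estimates along the lines of Proposition~\ref{aprioriPropSurfaces} specialised to the ODE setting'' also does not work: that proposition requires $F\geq 0$ and the phase conditions $\sin(\hth)<0$, $\cos(\hth)>0$, whereas Theorem~\ref{MainThmODE} holds for arbitrary $L$ with no phase restriction; the ODE setting needs, and the paper supplies, a different and elementary maximum-principle estimate based on the nonnegativity above. Likewise the equation is not solvable ``by direct integration'': the zeroth-order nonlinear term $\propto b^2(1+\phi'')$ obstructs this unless $b=0$.
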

We also prove an analogue of this result for the large and small radius limits. 
\begin{thm}\label{LargeSmallRadiusThmODE} In the situation of Theorem \ref{MainThmODE}, the coupled K\"ahler-Yang-Mills equations \eqref{KYMIntro} are always solvable on $L$, with respect to the class $[\omega_0]$, with coupling constant $\alpha >0$, and datum $f(y_1)$ given by the image of any function $A(x_1)$ under Legendre duality. 

The same holds for the small radius limit coupled equations \eqref{SmallRadiusEqu}, under the condition $\det(F^0)  > 0$, where $F^0(h) =  \sum_{i,j} F^0_{ij} dz_i\wedge\overline{dz}_j$ is a \emph{constant} curvature form for a metric on $L$.
\end{thm}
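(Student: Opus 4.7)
The plan is to extend the ODE-reduction used in the proof of Theorem \ref{MainThmODE} to each of the two limit systems. In both cases the underlying strategy is to exploit the $x_1$-only ansatz to collapse the coupled PDE system to a scalar fourth-order ODE, which can be solved by successive integrations as in \cite{fengSz}.

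For the coupled K\"ahler-Yang-Mills equations \eqref{KYMIntro}, I would fix a constant representative $F^0$ of $c_1(L)$ and look for $U(1)^2$-invariant solutions of the form $\omega = \omega_0 + \ii\del\delbar v(y_1)$ and $F = F^0 + \ii\del\delbar\eta(y_1)$. Passing to the symplectic potential $u(x)$, the Legendre dual of $v$, the Hessian $u_{ij}$ is diagonal and depends only on $x_1$. The constraint $\Lambda_{\omega}F = \mu$ then becomes a pointwise algebraic relation determining $\eta''$ in terms of $u^{11}$ and the constants $F^0_{ij}$, $\mu$. Substituting into the second equation, Abreu's formula $s(\omega) = -\sum_{i,j}\del_i\del_j u^{ij}$ together with the analogous symplectic expression for $\Lambda^2_{\omega}(F\wedge F)$ reduces the coupled system to a single fourth-order ODE for $u^{11}(x_1)$ with right-hand side $A(x_1)$. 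Following the pattern of \cite{fengSz}, two integrations convert this into a second-order ODE for the positive unknown $u^{11}$, which is solvable by elementary methods.

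For the small radius limit \eqref{SmallRadiusEqu}, the hypothesis $\det(F^0) > 0$ (after adjusting signs if necessary) implies that $F$ defines a K\"ahler form on $X$. I would then repeat the Legendre-dual analysis using $F$ itself as the background K\"ahler structure, writing $\omega = F + \ii\del\delbar\xi(y_1)$ and passing to symplectic coordinates adapted to $F$. The first equation $\Lambda_F\omega = c$ becomes an affine pointwise relation, and the second collapses to a fourth-order ODE in $x_1$ structurally analogous to the K\"ahler-Yang-Mills case.

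The main obstacle in both cases is to arrange, at the end of the two integrations, both the strict positivity of $u^{11}(x_1)$, so that $\omega$ genuinely defines a K\"ahler metric on the abelian surface, and the $\Z$-periodicity of $u(x_1) - \frac{1}{2}x_1^2$, so that $\omega$ descends to $X$. In the one-variable setting the positivity amounts to a single scalar inequality, which can be secured by suitable choice of integration constants, while the periodicity is automatic once the cohomological normalisations of $\mu$ (respectively of $c$) and the mean of $A$ are matched. Exactly as in Theorem \ref{MainThmODE}, the freedom to prescribe $A$ is therefore unconstrained except for its topologically fixed average.
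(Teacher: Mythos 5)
Your reduction step — using the $x_1$-only ansatz to solve the HYM (resp.\ J-) equation algebraically for the bundle potential in terms of the metric potential and substituting into the curvature equation — matches the paper's strategy, which obtains the identities $\varphi'' = a\psi''$ (large radius) and $\varphi'' = \frac{c\det(F^0)}{b^2+c^2}\psi''$ (small radius) and hence a single scalar ODE for the Legendre-dual potential. But the final step of your argument has a genuine gap. After the reduction, the equation for $U = (1+\phi'')^{-1}$ has the schematic form
\begin{equation*}
-\tfrac{1}{4}U'' \;-\; \frac{k}{U} \;=\; A(x_1) + \mathrm{const}, \qquad k = 4\alpha b^2 \ \text{(large radius)}, \quad k = \frac{\alpha b^2\det(F^0)}{b^2+c^2} \ \text{(small radius)},
\end{equation*}
and the coupling term $k/U$ depends nonlinearly on the unknown. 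Unlike the pure one-dimensional Abreu equation of \cite{fengSz}, this cannot be solved by ``two integrations'' or elementary quadrature, and positivity of $U$ cannot be ``secured by suitable choice of integration constants'' — there are no free constants once periodicity and the cohomological normalisation $\int_0^1 U^{-1}\,dx_1 = 1$ are imposed. The paper instead runs a continuity method: closedness rests on an a priori upper bound for $\phi''$ obtained by evaluating the equation at a maximum point of $1+\phi''$, where $\bigl(U\bigr)''\geq 0$ forces $k(1+\phi'') \leq \sup|A| + \mathrm{const}$, and this argument works \emph{only because} $k>0$; openness rests on the linearised operator $\beta \mapsto \tfrac14\beta'' - k(1+\phi'')^2\beta$ being formally self-adjoint with trivial kernel, again using $k>0$. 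Your proposal contains neither the a priori estimate nor the openness argument.

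Relatedly, you have misplaced the role of the hypothesis $\det(F^0)>0$. In the paper it is not primarily there to make $F$ a K\"ahler form so that one can Legendre-transform with respect to $F$; it is exactly the condition guaranteeing that the coefficient $k = \alpha b^2\det(F^0)/(b^2+c^2)$ of the nonlinear coupling term is positive, which is what the maximum principle and the invertibility of the linearisation require. (In the large radius case $k=4\alpha b^2\geq 0$ automatically, with $b=0$ degenerating to the solvable Abreu equation, which is why no extra hypothesis appears there.) Without identifying this sign condition as the crux, the asymmetry between the two halves of the statement — unconditional solvability for the large radius limit versus the constraint $\det(F^0)>0$ for the small radius limit — is left unexplained by your argument.
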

Let us return to the question of realising the solutions of the coupled equations \eqref{coupled_dHYM} effectively in the $B$-model. As recalled in Remark \ref{realizRmk}, it is always possible to embed $(X, \omega)$ isometrically as the zero section of the holomorphic cotangent bundle $T^*X$, endowed with a hyperk\"ahler metric defined in a \emph{formal} neighbourhood of the zero section. It is natural to ask when this metric extends at least to an open neighbourhood of the zero section, in the analytic topology. By the main result of \cite{feix}, this is the case if and only if $\omega$ is real analytic.
\begin{prop}\label{HKProp} Suppose the datum $A(x_1)$ is real analytic. Then, the metric $\omega$ underlying a solution of the coupled equations \eqref{coupled_dHYM} given by Theorem \ref{MainThmODE} is also real analytic. It follows that these solutions can be realised effectively in the $B$-model of an open neighbourhood of $X \subset T^*X$ endowed with a hyperk\"ahler metric, extending $\omega$.
\end{prop}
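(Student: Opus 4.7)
The plan is to trace through the reduction to an ODE that is implicit in the proof of Theorem \ref{MainThmODE}, and then to invoke classical real analytic regularity for ODEs together with Feix's hyperk\"ahler extension theorem \cite{feix}.

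First, since the equations \eqref{coupled_dHYM} are translation invariant in the symplectic coordinate $x_2$ and the source depends on $x_1$ alone, one seeks a solution whose symplectic potential is of the separated form
\begin{equation*}
u(x_1, x_2) = u_1(x_1) + \tfrac{1}{2} x_2^2,
\end{equation*}
and this is precisely the shape of the solution built in Theorem \ref{MainThmODE}. Under this ansatz the coupled system collapses to a nonlinear ODE for $u_1$, with right hand side $A(x_1)$ and coefficients built from elementary algebraic operations and $\arctan$ applied to the entries of $\nabla^2 u$ and the constant curvature form $F^0$ of a fixed reference metric on $L$.

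Second, the a priori estimates of Proposition \ref{aprioriPropSurfaces} ensure that $\nabla^2 u$ remains uniformly positive, so the ODE can be put in normal form
\begin{equation*}
u_1''(x_1) = \Phi\bigl(x_1, u_1(x_1), u_1'(x_1)\bigr),
\end{equation*}
with $\Phi$ jointly real analytic in its arguments on a tubular neighbourhood of the graph of the solution (here we use that $A$ is real analytic). Classical analytic regularity for ODEs with real analytic right hand side then implies that $u_1$ is real analytic on $\R$, hence so is $u$.

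Third, the uniform positivity of $\nabla^2 u$ and the analytic inverse function theorem imply that the Legendre map $y = \nabla u(x)$ is a real analytic diffeomorphism, so the Legendre transform $v(y) = x \cdot y - u(x)$ is real analytic; consequently the K\"ahler metric $\omega = \sqrt{-1}\sum_{i,j} v_{ij}\, dz_i \wedge d\bar z_j$ is real analytic on $X$. Feix's theorem \cite{feix} then provides a (Ricci-flat) hyperk\"ahler metric on an open analytic neighbourhood of the zero section $X \subset T^* X$ restricting to $\omega$ along $X$, which yields the $B$-model realisation requested in Remark \ref{realizRmk}. The main technical point is to justify the separation of variables in step one and to verify that the resulting ODE is genuinely regular (i.e.\ admits the analytic normal form above) throughout $T^1$; once this reduction is in place, the analyticity propagation is essentially automatic via the classical analytic ODE theory and the cited extension result.
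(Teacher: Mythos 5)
Your strategy is the same as the paper's: reduce to the scalar ODE in the symplectic coordinate $x_1$, use positivity of the Hessian to put it in analytic normal form, invoke classical analytic regularity for ODEs, transfer analyticity through the Legendre diffeomorphism, and conclude with Feix's theorem. Two details in your write-up are off, though neither is fatal. First, the normal form $u_1''=\Phi(x_1,u_1,u_1')$ cannot be right: by Abreu's formula the scalar curvature involves $\bigl((1+\phi'')^{-1}\bigr)''$, so the reduced equation \eqref{coupled_dHYM_ODE} is fourth order in the potential (equivalently second order in the quantity $U=(1+\phi'')^{-1}$). The paper's device is precisely to take $U$ as the unknown, obtaining a first-order analytic system for $(U,V)=(U,U')$ whose right-hand side is analytic on the region $U>0$; alternatively one can solve for $\phi''''$ and run the analytic ODE theorem at fourth order. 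Either way the conclusion stands, but the equation you wrote is not the one the solution satisfies. Second, you should not appeal to Proposition \ref{aprioriPropSurfaces} for the positivity of the Hessian: that proposition assumes $F\geq 0$ and sign conditions on $e^{\ii\hth}$ which are not hypotheses of Theorem \ref{MainThmODE}. The needed fact, namely $1+\phi''>0$ (i.e.\ $U>0$), is part of the construction in Theorem \ref{MainThmODE} itself and should be quoted from there. The remaining steps (analytic Legendre inversion, Feix) match the paper.
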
  
The proof is given in Section \ref{ODESec}. Naturally it would be interesting to understand when these extensions are complete.\\

The Appendix to this paper is devoted to the linearised K\"ahler-Yang-Mills equations in symplectic coordinates on a torus. In particular we prove that these linearised equations correspond to a scalar linear differential operator which has trivial kernel and is formally self-adjoint, with respect to the Lebesgue measure. Besides its application in our proof of Theorem \ref{MainThmLargeRadius}, we believe this may be a useful result in view of future applications.\\

\section{Extended gauge group and scalar curvature}\label{gaugeSec}
This Section is devoted to the proofs of Theorem 
\ref{ExtendedGaugeThm} and Corollary \ref{ExtendedGaugeCor}. Let $X$ be a compact $n$-dimensional K\"ahler manifold, with K\"ahler form $\omega$, and $L \to X$ a complex line bundle with a Hermitian metric $h$. We consider the space $\mathcal{A}$ of $h$-unitary connections on $L$, endowed with the symplectic structure given by $\omega^{\text{dHYM}}$.
The $\widetilde{\mathcal{G}}$-equivariant map $\theta$ defined as
\begin{align*}
\theta\colon &\mathcal{A} \to \text{Hom(Lie }\widetilde{\mathcal{G}},\text{Lie }\mathcal{G})\\
& A \mapsto \theta_A
\end{align*}
associates to each connection $A$ the projection operator $\theta_A$ introduced in \eqref{VertProj}.
We consider also the map $\theta^{\perp}$ given by
\begin{align*}
\theta^{\perp}\colon &\mathcal{A} \to \text{Hom(Lie }\mathcal{H},\text{Lie }\widetilde{\mathcal{G}})\\
& A \mapsto \theta^{\perp}_A
\end{align*}
where the \textit{lifting operator} $\theta^{\perp}_A$ is uniquely defined by $\text{Id} = \iota \circ \theta_A + \theta^{\perp}_A\circ p$, with $\iota$ and $p$ as in \eqref{LieSplitting}.
For any $\zeta \in \text{Lie }\widetilde{\mathcal{G}}$, $Y_{\zeta}$ denotes the vector field on $\mathcal{A}$ associated to the infinitesimal $\widetilde{\mathcal{G}}$-action on $\mathcal{A}\colon$
\begin{equation*}
Y_{\zeta | A} = \frac{d}{d\text{t}}\Big|_{ t=0}\text{exp}({t\zeta})\cdot A.
\end{equation*}
In particular we have
\begin{equation*}
Y_{\theta^{\perp}\eta_{\phi} | A} = -\eta_{\phi} \mathrel{\lrcorner} F_A
\end{equation*}
for any $\eta_{\phi} \in \text{Lie }\mathcal{H}$ (see e. g. \cite{AGG}, Lemma 1.5).
It follows from \cite{AGG}, Proposition 1.3, that the $\widetilde{\mathcal{G}}$-action on $\mathcal{A}$ is Hamiltonian if and only if there is a $\widetilde{\mathcal{G}}$-equivariant map $\sigma \colon \mathcal{A} \to (\text{Lie }\mathcal{H})^*$ satisfying
\begin{equation}\label{ModifiedMomentCondition}
\omega^{\text{dHYM}}(Y_{\theta^{\perp}\eta_{\phi} | A}, a) = \langle \mu_{\mathcal{G}}, a(\eta_{\phi}) \rangle + d\langle \sigma, \eta_{\phi} \rangle (a).
\end{equation}
We claim that this holds for the equivariant map defined by
\begin{equation*}
\langle \sigma(A), \eta_{\phi}\rangle = \frac{1}{n} \int_X \phi \re  \left(e^{-\ii\hth} (\omega - F(A))^n\right),
\end{equation*}
for all $\eta_{\phi}\in \text{Lie }\mathcal{H}$.

Since $\sqrt{-1}a\wedge \im  \left(e^{-\ii\hth} (\omega - F(A))^n\right)=0$, contracting with $\eta_{\phi}$ we obtain the identity 
\begin{align}\label{SigmaIdentity}
\nonumber &-\sqrt{-1}n a \wedge \left(\eta_{\phi}\mathrel{\lrcorner} \omega \right)\wedge \im  \left(e^{-\ii\hth} (\omega - F(A))^{n-1}\right)\\
&\nonumber+n a \wedge \left(\eta_{\phi}\mathrel{\lrcorner} F(A) \right)\wedge \re  \left(e^{-\ii\hth} (\omega - F(A))^{n-1}\right)\\
& +\sqrt{-1}a(\eta_{\phi})\im  \left(e^{-\ii\hth} (\omega - F(A))^n\right)=0.
\end{align}
On the other hand, by the above identity for the infinitesimal generator, we have
\begin{equation*}
\omega^{\text{dHYM}}(Y_{\theta^{\perp}\eta_{\phi} | A}, a) = -\int_X a \wedge \left(\eta_{\phi}\mathrel{\lrcorner} F(A) \right)\wedge \re  \left(e^{-\ii\hth} (\omega - F(A))^{n-1}\right).
\end{equation*}
Also, by definition,
\begin{equation*}
\langle \mu_{\mathcal{G}}, a(\eta_{\phi}) \rangle = +\frac{\sqrt{-1}}{n} \int_X a(\eta_{\phi})\im  \left(e^{-\ii\hth} (\omega - F(A))^n\right),
\end{equation*}
and similarly
\begin{align*}
d\langle \sigma, \eta_{\phi} \rangle (a) &=- \frac{1}{n} \int_X \phi \re \left(n\, da \wedge e^{-\ii\hth} (\omega - F(A))^{n-1}  \right) \\
&=-\sqrt{-1}\int_X \phi da \wedge \im \left(e^{-\ii\hth}(\omega - F(A))^{n-1}  \right)\\
&=-\sqrt{-1}\int_X a \wedge \left(\eta_{\phi}\mathrel{\lrcorner} \omega \right) \wedge \im  \left(e^{-\ii\hth} (\omega - F(A))^{n-1}\right).
\end{align*}
Hence, using the identity \eqref{SigmaIdentity}, we see that the condition \eqref{ModifiedMomentCondition} is satisfied and Theorem \ref{ExtendedGaugeThm} follows.

Now we endow $\mathscr{A}\times \mathscr{J}$ with the infinite dimensional K\"ahler structure given by the form
\begin{equation*}
\omega_{\alpha} = \alpha\omega^{\text{dHYM}} + \omega^{\text{DF}}
\end{equation*}
with $\alpha > 0$. The $\widetilde{\mathcal{G}}$-action on $\mathscr{A}\times \mathscr{J}$ preserves $\mathscr{P}$, and combining our computation above with the well-known results of Donaldson \cite{donaldson} and Fujiki \cite{fujiki} we see that it is Hamiltonian, with equivariant moment map $\mu_{\alpha}\!: \mathscr{A}\times \mathscr{J} \to (\operatorname{Lie} \widetilde{\mathcal{G}} )^*$ given by
\begin{align*}
\langle \mu_{\alpha}(A,J), \zeta \rangle &=- \int_X p_*(\zeta) s(J) \omega^n + \alpha \langle \mu_{\widetilde{\G}}(A), \zeta \rangle\\     &=\sqrt{-1} \alpha \int_X \theta_A \zeta \im\left(e^{-\ii\hth} (\omega - F(A))^n\right) \\
&-\int_X \phi \left( s(J)\omega^n - \alpha \re\left(e^{-\ii\hth} (\omega - F(A))^n\right) \right),
\end{align*}
for all $(A,J)\in \mathscr{A}\times \mathscr{J}$, $\zeta \in \operatorname{Lie} \widetilde{\mathcal{G}}$ and $p_*(\zeta) = \eta_{\phi}$.   

\section{A priori estimates and Theorem \ref{MainThmSurface}}\label{surfaceSec}
We consider the coupled equations \eqref{coupled_dHYM} when $X$ is an abelian surface $\C^2/\Lambda$. In particular we will soon assume that $L$ is semipositive. In order to simplify our exposition, following \cite{fengSz}, we can further assume that $X = \C^2/(\Z^2 \oplus \sqrt{-1}\Z^2)$, and that the background K\"ahler form is $\omega_0 =  \sqrt{-1} \sum_i dz_i \wedge \overline{dz}_i$. The general case only differs by slightly more complicated notation.

The group $U(1)^2$ acts on $X$, by translations in the direction $\sqrt{-1}\R^n$, and we will look for invariant solutions, so we are effectively considering equations on the real torus $\R^2/\Z^2$. Following \cite{fengSz}, Section 5 we may formulate the problem (using invariant \emph{complex coordinates}, with real part $y$) in terms of a convex function
\begin{equation*}
v(y) = \frac{1}{2}|y|^2 + \psi(y)
\end{equation*}
where $\psi\!:\R^n \to \R$ is periodic, with fundamental domain $\Omega = [0,1]\times[0,1]$, normalised by $\psi(0)=0$. The invariant metric $g$ is given by the real Hessian of $v(y)$, namely $\omega_g = \sqrt{-1} \sum_{i,j}  v_{ij}dz_i\wedge \overline{dz}_j$. Then, by a standard formula in K\"ahler geometry, we have
\begin{align*}
s(g) = -\frac{1}{4} v^{ij}[\log\det(v_{ab})]_{ij}.
\end{align*} 
We set $F(h) = \ii F$, and abusing notation slightly we think of $F$ as a periodic function with values in symmetric matrices (so we have $F(h) = \sum_{i,j} F_{ij} dz_{i}\wedge \overline{dz}_j$). Then the coupled equations \eqref{coupled_dHYM} become 
\begin{align}\label{coupled_dHYM_torus}
\begin{cases}
\im\left(e^{-\ii\hth}\det(v_{ij} - \ii F_{ij})\right) = 0\\
 -\frac{1}{4} v^{ij}[\log\det(v_{ab})]_{ij} -\alpha \frac{\re\left(e^{-\ii\hth}\det(v_{ij} - \ii F_{ij})\right)}{\det(v_{ab})} = f,  
 \end{cases}
\end{align}
where the phase $e^{\ii \hth}$ is determined by the cohomological condition
\begin{equation}\label{Phase}
\int_{\Omega} \det(v_{ij} - \ii F_{ij}) \in \R_{>0}e^{\ii\hth} 
\end{equation}
and the datum $f$ must satisfy the necessary cohomological constraint 
\begin{equation}\label{constr}
\int_{\Omega} f \det(v_{ab}) = -\alpha \int_{\Omega}\re\left(e^{-\ii\hth}\det(v_{ij} - \ii F_{ij})\right).   
\end{equation}
It is especially convenient to formulate the problem in terms of \emph{symplectic coordinates}, that is, in terms of the Legendre transform $u(x)$ of $v(y)$, see \cite{fengSz} Section 5. Recall that $u(x)$ is defined by the equation
\begin{equation*}
u(x) + v(y) = x\cdot y,
\end{equation*}
where we set $x = \nabla v(y)$. Then $u(x)$ has the form
\begin{equation}\label{potential}
u(x) = \frac{1}{2}|x|^2 + \phi(x).
\end{equation}   
where $\phi\!:\R^n \to \R$ is periodic, with domain $\Omega$, and we have the inverse relation
\begin{equation*}
y = \nabla u(x).
\end{equation*} 
Using a well-known result of Abreu for the scalar curvature in symplectic coordinates (see \cite{abreu}), as well as the fundamental Legendre duality property 
\begin{equation*}
v^{ij}(y) = u_{ij}(x),
\end{equation*} 
the coupled equations \eqref{coupled_dHYM} become
\begin{align}\label{coupled_dHYM_symplectic}
\begin{cases}
 \im\left(e^{-\ii\hth}\det(u^{ij} - \ii F_{ij}(\nabla u))\right) = 0\\
 -\frac{1}{4} [u^{ij}]_{ij} -\alpha  \re\left(e^{-\ii\hth}\det(u^{ij} - \ii F_{ij}(\nabla u))\right) \det(u_{ab})  = A,
 \end{cases}  
\end{align} 
where the datum $A$ is given by the relation $A(x) = f(y) = f(\nabla u)$. 

A key advantage of formulating the problem in symplectic coordinates is that it is now trivial to take the cohomological constraint \eqref{constr} into account: the datum $A(x)$ must simply satisfy
\begin{equation*}
\int_{\Omega} A(x) d\mu(x) = -\alpha \int_{\Omega}\re\left(e^{-\ii\hth}\det(v_{ij} - \ii F_{ij})\right)
\end{equation*}
with respect to the \emph{fixed} Lebesgue measure $d\mu(x)$.

In the following we start from the equations is symplectic coordinates with datum $A(x)$ and \emph{define} $f(y)$ through the relation 
\begin{equation*}
f(\nabla u(x)) = A(x).
\end{equation*}
Our first task is to establish the necessary a priori estimates for this problem, Proposition \ref{aprioriProp}. As in \cite{fengSz}, the first step for this is obtaining a uniform bound for the determinant.
\begin{lem}\label{detEstimate} Suppose the phase $e^{\ii\hth}$ satisfies
\begin{equation*}
\sin(\hth) < 0,\, \cos(\hth) > 0.
\end{equation*}
Then, there are uniform constants $c_1, c_2 > 0$, depending only on the coupling constant $\alpha$, the phase $e^{\ii\hth}$ and on $\sup |f|$, such that a solution of \eqref{coupled_dHYM_torus} with $F  \geq 0$ satisfies  
\begin{equation*}
0 < c_1 < \det(v_{ab}) < c_2.
\end{equation*}
(Recall $A(x), f(y)$ and $u(x), v(y)$ are related by Legendre transform). 
\end{lem}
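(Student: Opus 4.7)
The plan has three main steps: use the dHYM equation pointwise to force the coupling ratio $\rho := \det F/\det v$ into a controlled subinterval of $[0,1)$; reformulate the second equation as a prescribed-scalar-curvature equation with uniformly bounded data; then appeal to determinant estimates for such problems on abelian varieties, in the spirit of Feng--Sz\'ekelyhidi \cite{fengSz}.

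\textbf{Step 1: pointwise bound on $\rho$.} Expanding the $2\times 2$ determinant gives
\begin{equation*}
\det(v - \ii F) = (\det v - \det F) - \ii\,\det(v)\tr(v^{-1}F),
\end{equation*}
and the vanishing of $\im\!\bigl(e^{-\ii\hth}\det(v - \ii F)\bigr)$ becomes
\begin{equation*}
\tr(v^{-1}F) = -\tan(\hth)(1-\rho),\qquad \rho := \frac{\det F}{\det v}.
\end{equation*}
Under the hypotheses $\sin\hth<0$, $\cos\hth>0$, $F\geq 0$ both sides are non-negative. Writing $\mu_1,\mu_2\geq 0$ for the eigenvalues of $v^{-1/2}Fv^{-1/2}$, the arithmetic--geometric mean inequality $\tr(v^{-1}F) = \mu_1+\mu_2 \geq 2\sqrt{\mu_1\mu_2} = 2\sqrt{\rho}$, combined with the above identity, yields a quadratic inequality in $\sqrt{\rho}$ whose solution gives the sharp pointwise bound
\begin{equation*}
0 \leq \rho \leq \tan^2(\hth/2) \;=\; \frac{1-\cos\hth}{1+\cos\hth} \;<\; 1.
\end{equation*}

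\textbf{Step 2: reduction to prescribed scalar curvature.} Substituting the imaginary part identity into the real part, one finds the clean expression
\begin{equation*}
\frac{\re\!\bigl(e^{-\ii\hth}\det(v - \ii F)\bigr)}{\det v} = \frac{1-\rho}{\cos\hth},
\end{equation*}
which, by Step 1, lies in the positive interval $\bigl[2/(1+\cos\hth),\,1/\cos\hth\bigr]$ pointwise. Hence the second equation in \eqref{coupled_dHYM_torus} becomes
\begin{equation*}
s(g) = f + \alpha\,\frac{1-\rho}{\cos\hth},
\end{equation*}
with right-hand side uniformly bounded by a constant depending only on $\alpha$, $\hth$ and $\sup|f|$.

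\textbf{Step 3: determinant estimate.} This is the main obstacle. To promote the $L^\infty$ control on $s(g)$ to a two-sided bound on $\det(v_{ab})$, I would pass to the symplectic-coordinate form \eqref{coupled_dHYM_symplectic}, where the equation becomes a bounded-data version of Abreu's equation on the flat $2$-torus. Adapting the a priori $C^0$ estimate of Feng--Sz\'ekelyhidi \cite{fengSz} for the symplectic potential $u - \tfrac12|x|^2$ (which uses precisely the boundedness of the right-hand side and the fixed cohomology class $[\omega] = [\omega_0]$), and then bootstrapping through Abreu's operator, yields Hessian bounds and hence the desired $c_1 < \det(v_{ab}) < c_2$ with constants depending only on $\alpha$, $\hth$, $\sup|f|$.

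\textbf{Compatibility.} Writing $F(h) = \ii F$ with $F$ a real $(1,1)$-form, the direct computation
\begin{equation*}
\int_X (\omega - F(h))^2 = \bigl([\omega]^2 - [F]^2\bigr) - 2\ii\,[\omega]\cdot[F]
\end{equation*}
has strictly negative imaginary part whenever $[F]$ is semipositive and nonzero (since $[\omega]\cdot[F] > 0$ by positivity of $[\omega]$), and positive real part whenever $[\omega]^2 > [F]^2$. Both can be realised simultaneously, e.g.\ by taking $F$ a small positive multiple of $\omega$, establishing the compatibility claim.
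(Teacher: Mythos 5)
Your proposal is correct and follows essentially the same route as the paper: use the sign conditions on $e^{\ii\hth}$ together with $F\geq 0$ to extract pointwise bounds on the coupling term from the dHYM equation, so that in symplectic coordinates the system reduces to Abreu's equation $[u^{ij}]_{ij}=\tilde A$ with $\sup|\tilde A|$ under control, and then invoke the Feng--Sz\'ekelyhidi determinant estimate; the compatibility check via the cohomological phase formula is also the paper's argument in a more direct form. Your exact identity $\re\bigl(e^{-\ii\hth}\det(v-\ii F)\bigr)/\det v=(1-\rho)/\cos\hth$ with $0\leq\rho\leq\tan^2(\hth/2)$ is a slightly sharper packaging of the paper's two inequalities $F^2/\omega^2<1$ and $F\wedge\omega/\omega^2<|\tan\hth|/2$, and note that Feng--Sz\'ekelyhidi's Section 3 gives the determinant pinching directly from the bound on $\tilde A$, without first passing through Hessian estimates as your Step 3 suggests.
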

\begin{proof} Feng-Sz\'ekelyhidi \cite{fengSz} study Abreu's equation
\begin{equation*}
[u^{ij}]_{ij} = \tilde{A}
\end{equation*} 
in all dimensions and for an arbitrary smooth periodic function $\tilde{A}$ (with zero average). In loc. cit. Section 3 it is shown that solutions satisfy a uniform bound of the form
\begin{equation*}
0 < c'_1 < \det(u_{ab}) < c'_2
\end{equation*}
where the constants $c'_1$, $c'_2$ depend only on the dimension $n$ and a bound on $\sup |\tilde{A}|$. In our case, we can write the equations in symplectic coordinates \eqref{coupled_dHYM_symplectic} in the form $[u^{ij}]_{ij} = \tilde{A}$ with the choice
\begin{equation*}
\tilde{A} = -4 \alpha  \re\left(e^{-\ii\hth}\det(u^{ij} - \ii F_{ij}(\nabla u))\right) \det(u_{ab}) -4 A.
\end{equation*}
We claim that, under our assumptions, there is a uniform a priori bound for $\sup|\tilde{A}|$, depending only on $\sup |A|$, $\alpha$, $e^{\ii\hth}$. Equivalently, we claim that there is a uniform bound for the quantity
\begin{equation*}
\frac{\re\left(e^{-\ii\hth}\det(v_{ij} - \ii F_{ij})\right)}{\det(v_{ab})} = \frac{\re(e^{-\ii \hth}(\omega - \ii F)^2)}{\omega^2}.
\end{equation*}
In order to see this, note that in the two-dimensional case the coupled equations \eqref{coupled_dHYM} may be written as
\begin{align*}
 \begin{cases}
  F^2 \sin (\hth) - 2 F\wedge \omega  \cos (\hth)-\omega^2 \sin(\hth) =0\\
  s(\omega) - \alpha \frac{ -F^2 \cos(\hth) - 2 F\wedge\omega  \sin(\hth) +\omega ^2 \cos (\hth)}{\omega^2}=f. 
  \end{cases}
\end{align*}
In particular the dHYM equation is 
\begin{equation*}
 \sin (\hth )\frac{F^2}{\omega^2} - 2 \frac{F\wedge \omega}{\omega^2}  \cos (\hth ) = \sin (\hth ).
\end{equation*}
So under the conditions $\sin(\hth) < 0$, $\cos(\hth) > 0$, together with semipositivity $F \geq 0$, the dHYM equation implies the a priori bounds
\begin{equation}\label{aprioriPhase}
\frac{F^2}{\omega^2} < 1, \frac{F\wedge \omega}{\omega^2} < \frac{|\tan( \hth)|}{2}, 
\end{equation}
which immediately give the required bounds on $\tilde{A}$.  
\end{proof}
It is possible to obtain higher order estimates from the bound on the determinant given by Lemma \ref{detEstimate}. Following \cite{fengSz} Section 4 the key idea is to write the second equation in \eqref{coupled_dHYM_symplectic} in the form
\begin{equation}\label{linearMA}
U^{ij} w_{ij} = \tilde{A} 
\end{equation}
where $U^{ij}$ is the cofactor matrix of the Hessian $u_{ij}$, while 
\begin{align*}
& w = (\det(u_{ab}))^{-1},\\
& \tilde{A} = -4 \alpha  \re\left(e^{-\ii\hth}\det(u^{ij} - \ii F_{ij}(\nabla u))\right) \det(u_{ab}) -4 A.
\end{align*}
Note that this rewriting is possible because of the identity $[U^{ij}]_i = 0$. Then \eqref{linearMA} can be regarded as a non-homogeneous linearised Monge-Amp\`ere equation satisfied by $w$. 
\begin{lem}\label{highEstimatesLem} Suppose the phase $e^{\ii\hth}$ satisfies
\begin{equation*}
\sin(\hth) < 0,\, \cos(\hth) > 0.
\end{equation*}
Then, there are uniform constants $0 < \Lambda_0 < \Lambda_1$, $0<\delta < 1$, $\Lambda_{2, \delta} >0$, depending only on $e^{\ii\hth}$, $\alpha$ and $A$, such that for a solution $u$, $F$ of  \eqref{coupled_dHYM_symplectic} with $F\geq 0$ we have
\begin{equation*}
\Lambda_0 I < u_{ij} < \Lambda_1 I,\, ||u||_{C^{2, \delta}} < \Lambda_{2,\delta}. 
\end{equation*}
\end{lem}
\begin{proof} We first observe that by \cite{fengSz} Lemma 4 we have a uniform $C^1$ bound on $u$. This is independent of the equation satisfied by $u$ and holds simply by periodicity, positivity $u_{ij} > 0$ and the normalisation $\psi(0) = 0$. In particular we have a uniform $C^0$ bound on $u$.

Let us now show that we have an estimate on $|| u ||_{C^{2,\delta}}$ for some $\delta > 0$, depending only on $\sup|A|$, $\alpha$, $e^{\ii\hth}$. In \cite{TrudWang} Section 3.7, Corollary 3.2, Trudinger-Wang give an interior H\"older estimate for a $C^2$ solution of the non-homogeneous linearised Monge-Amp\`ere equation on the ball $B_1(0) \subset \R^n$, for all $n$. Note that, adapting to our present notation, they would actually write \eqref{linearMA} as
\begin{equation*} 
U^{ij} w_{ij} = \left(\frac{\tilde{A}}{\det(u_{ab})}\right)\det(u_{ab}). 
\end{equation*}
Then their estimate takes the form
\begin{equation*}
|| w ||_{C^{\delta}(B_{1/2}(0))} \leq C \left(|| w ||_{C^0(B_{1}(0))} + \int_{B_{1}(0)} \Big|\frac{\tilde{A}}{\det(u_{ab})}\Big|^n \det(u_{ab}) d\mu \right),
\end{equation*}
where $d\mu$ is the Lebesgue measure (so $\det(u_{ab}) d\mu$ is the Monge-Amp\`ere measure associated with $u$), where the constants $\delta, C > 0$ depend only on $n$ and a pinching for the quantity $\det(u_{ab})$, that is, on constants $c'_1, c'_2 > 0$ such that
\begin{equation*}
0 < c'_1 < \det(u_{ab}) < c'_2.
\end{equation*}
In the proof of Lemma \ref{detEstimate} we have shown that our current assumptions on $e^{\ii\hth}$ and $F$ imply a uniform $C^0$ bound for the function $\tilde{A}$, depending only on $\tan(\hth)$, $\alpha$, $\sup |A|$, see \eqref{aprioriPhase}. Moreover Lemma \ref{detEstimate} shows that the pinching constants $c'_1, c'_2 > 0$ can be chosen uniformly, depending only on the same quantities. Recalling that $w = (\det(u_{ab}))^{-1}$, we find that there is a uniform $C^{\delta}$ bound on $w$ on the ball $B_{1/2}(0)$, in terms of $\tan(\hth)$, $\alpha$, $\sup |A|$. With our current conventions, the ball $B_{1/2}(0)$ does not contain a period domain $\Omega = [0,1]\times[0,1]$, but this is only a matter of notation. For example we could have started with the lattice $\Lambda = \frac{1}{4}\Z^2\oplus\frac{\sqrt{-1}}{4}\Z^2$. So we get a uniform a priori $C^{\delta}$ bound on $w$ everywhere. 

By the definition of $w$, and the regularity we just obtained, the function $u$ satisfies the Monge-Amp\`ere equation
\begin{equation*}
w(x) \det(u_{ab}(x)) = 1,
\end{equation*}
with $C^{\delta}$ coefficients. A well-known Schauder estimate due to Caffarelli \cite{caff} shows that then there is a uniform a priori $C^{2,\delta}$ bound on $u(x)$, depending only on $\sup|A|$, $\alpha$, $e^{\ii\hth}$. 

We claim that this implies a uniform bound $\Lambda_0 I < u_{ij} < \Lambda_1 I$. Equivalently, we need to show that the eigenvalues of the Hessian $u_{ij}$ are uniformly bounded, and bounded away from $0$, in terms of the usual quantities. But this follows immediately from the uniform bound on the determinant $0 < c'_1 < \det(u_{ab}) < c'_2$ and the uniform bound on $||u||_{C^2}$, which we established above.
\end{proof}
\begin{rmk}\label{tanRmk} The proof and \eqref{aprioriPhase} actually show that the bounds only depend on an upper bound for the quantities $\alpha$, $\alpha |\tan(\hth)|$ and $\sup|A|$. 
\end{rmk}
We can now complete the proof of Proposition \ref{aprioriPropSurfaces}. Recall that this claims that there are a priori bounds of all orders on solutions $(g, F)$ of \eqref{coupled_dHYM_torus} and on the positivity of the metric $g$, depending only on $e^{\ii\hth}$, $\alpha$, $f$, under the conditions
\begin{equation*}
\sin(\hth) < 0,\, \cos(\hth) > 0,\, F\geq 0.
\end{equation*}
It is convenient to write 
\begin{equation*}
F_{ij} = \left[\frac{1}{2} y^T B y + \varphi(y)\right]_{ij}
\end{equation*} 
where $B$ is a fixed, symmetric positive semidefinite matrix, and $\varphi(x)$ is periodic, with period domain $\Omega$, and satisfies $\varphi(0)=0$. 

We will use the well-known reduction of the dHYM equation to complex Monge-Amp\`ere equation in the case of a complex surface (see e.g. \cite{collinsXie}). As in the proof of Lemma \ref{detEstimate}, we write the dHYM equation as
\begin{equation*}
- F^2 \sin (\hth  ) + 2 F\wedge \omega  \cos (\hth  ) = -\omega^2 \sin (\hth  ).
\end{equation*}
We consider a general equation of the form
\begin{equation*}
(c_1 F + c_2 \omega)^2 = c_3 \omega^2 
\end{equation*}
for some choice of constants $c_i$. This is of course
\begin{equation*}
 c^2_1 F^2 + 2c_1 c_2 F\wedge \omega = (c_3 - c^2_2) \omega^2. 
\end{equation*}
Recall we have $\sin(\hth ) < 0$, $\cos(\hth ) > 0$. Then choosing
\begin{equation*}
c_1 =  (-\sin(\hth ))^{1/2},\,c_2 = \frac{\cos(\hth )}{(-\sin(\hth ))^{1/2}},\,c_3= -\frac{1}{\sin(\hth )}
\end{equation*}
shows that the dHYM condition becomes the complex Monge-Amp\`ere equation
\begin{equation*}
\left((-\sin(\hth ))^{1/2} F - \frac{\cos(\hth )}{(-\sin(\hth ))^{1/2}} \omega\right)^2 = -\frac{1}{\sin(\hth )}\omega^2 
\end{equation*}
or equivalently
\begin{equation*}
\chi^2 = \omega^2 
\end{equation*}
where 
\begin{equation*}
\chi  = -\sin(\hth ) F + \cos(\hth ) \omega .   
\end{equation*}
We should think of this as an equation for $\chi$, and so $F$, given $\omega$. Note that $\chi$ is automatically a K\"ahler class. By the Calabi-Yau Theorem, this Monge-Amp\`ere equation is solvable iff
\begin{equation*}
\int \chi^2 =  \int\omega^2, 
\end{equation*}
which of course determines $e^{\ii \hth}$ just as before. In our situation, this reduces to the real Monge-Amp\`ere
\begin{equation}\label{realMA}
\det(-\sin(\hth) F_{ij} + \cos(\hth)v_{ij}) = \det(v_{ij}).
\end{equation}
By Lemma \ref{highEstimatesLem} and the Legendre transform we have a uniform estimate on $|| v||_{C^{2,\delta}}$, depending only on $\sup|A|$, $\alpha$, $e^{\ii\hth}$. Moreover, just as in the proof of that Lemma, we observe that by \cite{fengSz} Lemma 4 we have a uniform $C^1$ bound on $\varphi$. This is independent of the equation satisfied by $\varphi$ and holds simply by periodicity, (semi)positivity $F_{ij} \geq 0$ and the normalisation $\psi(0)=0$. In particular we have a uniform $C^0$ bound on $\varphi$. Caffarelli's H\"older estimates for the real Monge-Amp\`ere equation then give a uniform bound on $||\varphi||_{C^{2, \delta}}$, depending only on $\sup|A|$, $\alpha$, $e^{\ii\hth}$. In particular we have a uniform $C^{\delta}$ bound on $F_{ij}$.

We use the latter estimate on the bundle curvature $F$ in the linearised Monge-Amp\`ere equation \eqref{linearMA}, yielding a uniform $C^{2, \delta}$ bound on $w = (\det(u_{ab}))^{-1}$ and so in turn a uniform bound on $||u||_{C^{4, \delta}}$, depending only on $|| A ||_{C^{\delta}}$, $\alpha$, $e^{\ii \hth}$.

We can now proceed inductively, using the equations \eqref{linearMA} and \eqref{realMA}, to obtain estimates of all orders on $v$ and $\varphi$, depending only on $A$, $\alpha$, $e^{\ii \hth}$. Proposition \ref{aprioriPropSurfaces} follows.\\  

Given the a priori estimates of Proposition \ref{aprioriPropSurfaces}, we are in a position to prove Theorem \ref{MainThmSurface}. Recall that this involves the choice of coupling constant 
\begin{equation*}
\alpha = \alpha' \cos(\hth)
\end{equation*}
for some fixed $\alpha' > 0$. The proof relies on the continuity method. We apply this to the family of equations, depending on a parameter $t \in [0,1]$, given by
\begin{align}\label{continuityMethodSurface} 
\begin{cases}
  \im\left(e^{-\ii\hth}\det(u^{ij} - \ii F_{ij}(\nabla u))\right) = 0\\
  [u^{ij}]_{ij}  = \tilde{A}_t -4 (1-t) \int_{\Omega} A(x)d\mu(x),  
 \end{cases}
\end{align} 
where we set
\begin{equation*}
\tilde{A}_t = -4 \alpha'\cos(\hth)  \re\left(e^{-\ii\hth}\det(u^{ij} - \ii F_{ij}(\nabla u))\right) \det(u_{ab}) -4 t A.
\end{equation*} 
For $t = 0$ the equations are solvable by choosing $v_{ij} = u^{ij}$ and $F_{ij}$ to be \emph{constant} representatives of their cohomology classes. 

By Proposition \ref{aprioriPropSurfaces} the set of times $t \in [0,1]$ for which the equations \eqref{continuityMethodSurface} are solvable is closed as long as the solution satisfies $F \geq 0$. We claim that if the classes $[F]$ and $[\omega]$ are sufficiently close, depending only on $\sup|A|$ and $\alpha'$, then the bundle curvature actually remains negative, i.e. the condition $F > 0$ is \emph{closed} along the continuity path. To see this we use the complex Monge-Amp\`ere equation 
\begin{equation*}
(-\sin(\hth) F + \cos(\hth)\omega)^2 = \omega^2
\end{equation*}
satisfied by $F$. We have shown in the proof of Lemma \ref{detEstimate} that, assuming only semiposivity $F\geq 0$, one has a priori bounds on $\omega$,
\begin{equation*}
0 < \Lambda_0 \omega_0 < \omega < \Lambda_1 \omega_0,\, ||\omega||_{C^{\delta}(\omega_0)} < \Lambda_{2,\delta}, 
\end{equation*}
depending only on an upper bound for the quantities $\alpha$, $\alpha |\tan(\hth)|$, $\sup|A|$ (see Remark \ref{tanRmk}). Thus, with our choice of coupling constant $\alpha = \alpha'  \cos(\hth)$ for some fixed $\alpha'$, we see that the constants $\Lambda_0$, $\Lambda_1$, $\Lambda_{2,\delta}$ above can be chosen uniformly in terms of $\alpha'$, $\sup|A|$ only, and in particular do not depend on $t \in [0, 1]$ and on $[F]$. Then with our assumptions, if $[F] - [\omega] \to 0 \in H^{1,1}(X, \R)$ we have $\cos(\hth) \to 0^+$, $-\sin(\hth) \to 1^-$ and $|| F - \omega||_{\omega} \to 0$. So by choosing $[F]$ and $[\omega]$ to be sufficiently close, depending only on $\alpha'$, $\sup|A|$ we can make sure that $F$ remains strictly positive along the continuity path.

It remains to check openness of the continuity path. The condition $F>0$ is clearly open, so we only need to show that, with our choice $\alpha = \alpha'\cos(\hth)$, the linearisation of the equations \eqref{continuityMethodSurface} at any point of the path are always solvable provided $[F]$, $[\omega]$ are sufficiently close, in a uniform way. Consider the equations obtained in the limiting case $\cos(\hth) = 0$:
\begin{align*}
 \begin{cases}
  F^2 = \omega^2 \\
  s(\omega) = f(\nabla u). 
  \end{cases}
\end{align*}
By the results of \cite{fengSz} Section 2, the corresponding linearised equations are uniquely solvable, so the same holds for the linearisation of \eqref{continuityMethodSurface} for $\cos(\hth)$ sufficiently small, depending only on our a priori estimates on solutions of \eqref{continuityMethodSurface}, and so on $\alpha'$, $A$. This completes the proof of Theorem \ref{MainThmSurface}.

\section{Large/small radius limits and Theorem \ref{MainThmLargeRadius}}\label{KYMSec}

We begin with a proof of Proposition \ref{LargeRadiusProp}. As usual, it is convenient to write $F(h) = \ii F$ and set $z = \int_X (t\omega - \ii F)^n$. Identifying top classes with their integrals, we may expand as $t \to \infty$
\begin{equation*}
z = \big(t^n [\omega]^n - t^{n-2} \frac{n(n-1)}{2}[\omega]^{n-2} \cup [F] \cup [F]\big) -  \ii t^{n-1} n [\omega]^{n-1} \cup [F] + O(t^{n-3}).
\end{equation*} 
By definition, we have
\begin{align*}
e^{-\ii \hth} &= \frac{\bar{z}}{|z|}\\
&= \left(1 - \frac{(n [\omega]^{n-1} \cup [F])^2}{2([\omega]^n)^2}\frac{1}{t^2}\right) + \ii \frac{n [\omega]^{n-1} \cup [F]}{[\omega]^n}\frac{1}{t} + O\left(\frac{1}{t^3}\right).
\end{align*} 
It follows that we have
\begin{equation*}
\im\left(e^{-\ii\hth}(t\omega - \ii F)^n\right) = n\left(- \omega^{n-1}\wedge F + \frac{[\omega]^{n-1} \cup [F]}{[\omega]^n}\omega^n\right)t^{n-1} + O(t^{n-3}), 
\end{equation*}
and similarly
\begin{align*}
&\re\left(e^{-\ii\hth}(t\omega - \ii F)^n\right)\\& =  \omega^n t^n\\
& -\left(\frac{n(n-1)}{2} \omega^{n-2} \wedge F \wedge F - \frac{n [\omega]^{n-1} \cup [F]}{[\omega]^n} n\omega^{n-1}\wedge F + \frac{(n [\omega]^{n-1} \cup [F])^2}{2([\omega]^n)^2}\omega^n\right )t^{n-2}\\& + O(t^{n-4}). 
\end{align*}
Now Proposition \ref{LargeRadiusProp} follows at once from the definition of $\mu_{\widetilde{\G}}$.\\

The case of small radius, Proposition \ref{SmallRadiusProp}, is similar. We have, as $t \to 0$
\begin{equation*}
z = (-\ii)^n\big([F]^n + \ii n [\omega]\cup [F]^{n-1} t + O(t^2)\big),
\end{equation*}
so
\begin{align*}
e^{-\ii \hth} &= \frac{\bar{z}}{|z|}\\
&= (\ii)^n \left(1 - \ii \frac{n [\omega]\cup [F]^{n-1}}{[F]^n}t + O(t^2)\right),
\end{align*}
and
\begin{equation*}
e^{-\ii \hth} (t\omega - \ii F)^n = F^n + \ii \left(n\omega\wedge F^{n-1}-\frac{n[\omega]\cup[F]^{n-1}}{[F]^n} F^n\right)t +O(t^2).
\end{equation*}
Proposition \ref{SmallRadiusProp} follows immediately.\\

We proceed to prove our main results concerning the K\"ahler-Yang-Mills system on an abelian variety $X$ of arbitrary dimension $n$, Proposition \ref{aprioriProp} and Theorem \ref{MainThmLargeRadius}. Recall the system is given by 
\begin{align*}
\begin{cases}
 \Lambda_g F = \mu\\
  s(g) + \alpha \Lambda^2_g ( F \wedge F ) = f, 
  \end{cases}
\end{align*}
where $F(h) = \ii F$ is the curvature of a Hermitian metric on the fibres of a holomorphic line bundle $L\to X$, of degree $-\mu$, and $f \in C^{\infty}(M)$ is a prescribed function. Note that a solution $g$ must satisfy the cohomological constraint
\begin{equation}\label{constrKYM}
\int_M f \frac{\omega^n_g}{n!} = \alpha \int_M F  \wedge F  \wedge \frac{\omega^{n-2}_g}{(n-2)!}.
\end{equation}
Using the identity (involving pointwise norms)
\begin{equation*}
\Lambda^2_g ( F \wedge F ) = 2||\Lambda_g F ||^2_g - 2||F ||^2_g
\end{equation*}
the equations can be written in the form
\begin{align*}
\begin{cases}
  \Lambda_g F  = \mu,\\
  s(g) = f + 2\alpha || F ||^2_g - 2 \alpha\mu^2.
  \end{cases}
\end{align*}
Following \cite{fengSz} we may assume, without loss of generality, that $X$ is the abelian variety $\C^n/(\Z^n\oplus \sqrt{-1}\Z^n)$ and $[\omega_0]$ is the class of the metric $\omega_0 =  \sqrt{-1} \sum_i dz_i \wedge \overline{dz}_i$. Then $U(1)^n$ acts on $X$, by translations in the $\ii\R^n$ direction. We suppose that $f$ is $U(1)^n$-invariant and look for $U(1)^n$-invariant solutions. As in the previous Section, we formulate the problem in terms of the convex function
\begin{equation*}
v(y) = \frac{1}{2}|y|^2 + \psi(y)
\end{equation*}
where $\psi\!:\R^n \to \R$ is periodic, with period domain $\Omega = [0,1]^n$, and of its Legendre transform
\begin{equation}\label{potential}
u(x) = \frac{1}{2}|x|^2 + \phi(x).
\end{equation}   
where $\phi\!:\R^n \to \R$ is periodic with the same period domain $\Omega$. As before, the invariant metric $g$ is given by the real Hessian of $v(y)$, namely $\omega_g = \sqrt{-1} \sum_{i,j}  v_{ij}dz_i\wedge \overline{dz}_j$. Then we have
\begin{align*}
& s(g) = -\frac{1}{4} v^{ij}[\log\det(v_{ab})]_{ij},\\
& ||F ||^2_{g} = v^{ij}v^{kl} F_{il} F_{kj}.
\end{align*} 
So the equations become
\begin{align}\label{cplxEqu}
\begin{cases}
  v^{ij} F_{ij} = \mu\\
  v^{ij}[\log\det(v_{ab})]_{ij} = -4 f + 8\alpha \mu^2 - 8\alpha v^{ij}v^{kl} F_{il} F_{kj}.
\end{cases}
\end{align} 
Here $F = F_{ij}$ is regarded as a periodic function with values in symmetric matrices (so we have $F(h) =  -\sum_{i,j} F_{ij} dz_{i}\wedge \overline{dz}_j$). 

In terms of the Legendre transform, the equations are
\begin{align}\label{symplEqu}
\begin{cases}
  u_{ij} F_{ij}(\nabla u) = \mu\\
  [u^{ij}]_{ij} = -4 A + 8\alpha \mu^2 - 8\alpha u_{ij}u_{kl} F_{il}(\nabla u) F_{kj}(\nabla u),
\end{cases}
\end{align} 
where we set $A(x) = f(y) = f(\nabla u)$. From this symplectic viewpoint, it is trivial to take the constraint \eqref{constrKYM} into account: the datum $A(x)$ must simply satisfy
\begin{equation*}
\int_{\Omega} A(x) d\mu(x) = \alpha \int_M F \wedge F \wedge \frac{\omega^{n-2}_g}{(n-2)!} 
\end{equation*}
with respect to the fixed Lebesgue measure $d\mu(x)$. As in the previous Section, we start from the equations is symplectic coordinates with datum $A(x)$ and \emph{define} $f(y)$ through the relation $f(\nabla u(x)) = A(x)$.\\

We can now establish our a priori estimates, Proposition \ref{aprioriProp}. A simple computation shows that we have
\begin{align*}
& v^{ij} F_{ij} = \tr(\Hes(v)^{-1} F),\\
& v^{ij}v^{kl} F_{il} F_{kj} = \tr\left((\Hes(v)^{-1} F)^2\right).
\end{align*}
So our equations, in invariant complex coordinates, are equivalent to
\begin{align*}
\begin{cases}
  \tr(\Hes(v)^{-1} F) = \mu,\\
  v^{ij}[\log\det(v_{ab})]_{ij} = -4 f + 8\alpha \mu^2 - 8\alpha \tr\left((\Hes(v)^{-1} F)^2\right).
\end{cases}
\end{align*}
Suppose $v$, $F$ give a solution with $F \geq 0$ (so in particular the bundle $L$ is seminegative). Note that $\Hes(v)^{-1} F$ is a product of symmetric matrices and so it is similar to a symmetric matrix, hence it has real eigenvalues $\lambda_i$. Since both $\Hes(v)^{-1}$ and $F$ are positive definite and semidefinite respectively, by assumption, we have in fact $\lambda_i \geq 0$.
The condition
\begin{equation*}
\tr(\Hes(v)^{-1} F) = \mu > 0
\end{equation*}
immediately gives the bound $0 \leq \lambda_i \leq \mu$. Therefore
\begin{equation*}
0 \leq \tr\left((\Hes(v)^{-1} F)^2\right) = \sum_{i} \lambda^2_i < n\mu^2.
\end{equation*}
It follows immediately that under the semipositivity assumption $F\geq 0$ there is a uniform $C^0$ bound for the image under Legendre duality of the quantity 
\begin{equation}\label{tildeA}
\tilde{A} =  -4A  + 8\alpha \mu^2 - 8\alpha u_{ij}u_{kl} F_{il}(\nabla u) F_{kj}(\nabla u),
\end{equation}
depending only on $\sup |f|$,$n$,$\mu$. This bound is preserved under pullback by the diffeomorphism induced by Legendre duality. On the other hand the second equation in \eqref{symplEqu} is precisely $[u^{ij}]_{ij} = \tilde{A}$. If follows that we have a uniform a priori bound on the quantity $[u^{ij}]_{ij}$, depending only on $\sup |f|$, $n$, $\mu$. From here, proceeding exactly as in the proof of Lemma \ref{highEstimatesLem}, we find that there are uniform constants $0 < \Lambda_0 < \Lambda_1$, $0 < \delta < 1$, $\Lambda_{2,\delta} > 0$, depending only on $A$, $n$, $\mu$, such that for a solution $u$, $F$ of \eqref{symplEqu} with $F\geq 0$ we have
\begin{align}\label{metricBoundsKYM}
\nonumber&\Lambda_0 I < u_{ij} < \Lambda_1 I,\\
&|| u ||_{C^{2,\delta}} < \Lambda_{2,\delta}.
\end{align}

Let us now consider the bundle curvature, or equivalently the form $F$. Recall that, on the universal cover, this is given by the Hessian of a function,  
\begin{equation*}
F_{ij} = \left[\frac{1}{2} y^T B y + \varphi(y)\right]_{ij},
\end{equation*}
where $B$ is a fixed, symmetric positive semidefinite matrix, and $\varphi(x)$ is periodic, with period domain $\Omega$. We can normalise $\varphi$ so that $\varphi(0)=0$. The HYM equation $\Lambda_g F = \mu$ satisfied by $h$ can be seen as a second order linear elliptic PDE, with periodic coefficients, satisfied by the periodic function $\varphi$,
\begin{equation}\label{poisson}
v^{ij}\varphi_{ij} = \mu - v^{ij} B_{ij}. 
\end{equation}
By standard Schauder theory and periodicity there is a bound
\begin{equation*}
|| \varphi||_{C^{2,\delta}(\Omega)} \leq C_{2, \delta} \left(||\varphi||_{C^{0}(\Omega)} + ||\mu - v^{ij}B_{ij}||_{C^{2,\delta}(\Omega)}\right),
\end{equation*} 
where $C_{2, \delta} > 0$ depends only on $|| v^{ij} ||_{C^{2,\delta}(\Omega)}$ and the ellipticity constants. By our previous a priori bounds \eqref{metricBoundsKYM} and the Legendre transform, both quantities are uniformly bounded in terms of $\sup|f|$, $n$, $\mu$. It follows that in fact we have an a priori bound of the form
\begin{equation*}
|| \varphi||_{C^{2,\delta}(\Omega)} \leq C_{2, \delta}  ||\varphi||_{C^{0}(\Omega)} + C'_{2, \delta}
\end{equation*} 
where the constants $C_{2, \delta}, C'_{2, \delta}>0$ depend only on $\sup|f|$, $n$, $\mu$. Moreover, just as in the proof of Lemma \ref{highEstimatesLem} and by \cite{fengSz} Lemma 4, we have a uniform $C^1$ bound on $\varphi$, so we see that $|| \varphi||_{C^{2,\delta}(\Omega)}$ is controlled only by $\sup|f|$, $n$, $\mu$.

As in the proof of Proposition \ref{aprioriPropSurfaces}, we can use this estimate on $\varphi$ in the linearised Monge-Amp\`ere equation \eqref{linearMA}, with $\tilde{A}$ now given by \eqref{tildeA}. The resulting $C^{\delta}$ bound on $\tilde{A}$ gives a $C^{2, \delta}$ bound on $w = (\det(u_{ab}))^{-1}$ and so a uniform bound on $||u||_{C^{4, \delta}}$, depending only on $|| A ||_{C^{\delta}}$, $\mu$, $n$.

We can now proceed inductively, using the linearised Monge-Amp\`ere \eqref{linearMA} (with right hand side given by \eqref{tildeA}) and the Poisson equation \eqref{poisson}, to obtain estimates of all orders on $v$ and $\varphi$, depending only on $A$, $\mu$, $n$. Proposition \ref{aprioriProp} follows.\\  

We are in a position to prove Theorem \ref{MainThmLargeRadius}. Recall for this result we have a negative line bundle $L$, respectively an ample line bundle $N$ on $X$, where $L$ has degree $-\mu$ and $[\omega] = c_1(N)$. We are concerned with the system
\begin{align*}
\begin{cases} 
  u_{ij} F_{ij}(\nabla u) = k \mu\\
  [u^{ij}]_{ij} = -4A + 8\frac{\alpha}{k^2} \mu^2 - 8\frac{\alpha}{k^2} u_{ij}u_{kl} F_{il}(\nabla u) F_{kj}(\nabla u).
\end{cases}
\end{align*} 
Here $\ii F$ is the curvature of a metric on the fibres of $L\otimes (k-1)\mu N$ for some $k \geq 1$. We claim that by taking $k$ sufficiently large, depending only on $\sup|A|$, $n$, $\mu$, we can find $u$ and $F$ solving the equations.

For the proof it is convenient to work instead with the $\Q$-line bundle $(1-\beta)L + \frac{\beta}{n} N$. Here $\beta$ is a parameter in the construction, to be chosen appropriately. At the end of the argument we will see how to obtain from this a solution on a genuine line bundle. So we have
\begin{equation*}
F_{ij} = (1-\beta) \tilde{F}_{ij} + \beta\frac{\mu}{n} v_{ij},\, \beta \in (0, 1)\cap \Q
\end{equation*}  
where $\ii \tilde{F}$ is the curvature of some metric on the fibres of $L$. 
 
The proof relies on the continuity method. We apply this to the family of equations, depending on the parameter $\beta$, given by
\begin{align}\label{ContPathSymp}
\begin{cases}
  u_{ij} F_{ij}(\nabla u) = \mu\\
  [u^{ij}]_{ij} = -4tA - 4(1-t) \int_{\Omega}A(x)d\mu(x)+ 8\alpha \mu^2 - 4 u_{ij}u_{kl} F_{il}(\nabla u) F_{kj}(\nabla u)\\
 F > 0,
\end{cases}
\end{align}
for $t \in [0,1]$. 

When $t = 0$ a solution of \eqref{ContPathSymp} is given in complex coordinates by taking 
\begin{align*}
 v(y) = \frac{1}{2} |y|^2,\,F_{ij} = (1-\beta) B_{ij} + \beta\frac{\mu}{n} v_{ij} > 0
\end{align*}
for all $\beta \in (0,1)\cap\Q$, where $B > 0$ is a constant symmetric matrix. 

We will show in the Appendix that the linearised equations corresponding to the system \eqref{symplEqu} are uniquely solvable. This implies that the set of times $t \in [0, 1]$ for which \eqref{ContPathSymp} has a solution is open.  

We claim that this set is also closed. By Proposition \ref{aprioriProp} we have a priori $C^{k}$ estimates of all orders on solutions of \eqref{ContPathSymp}, as well as on the positivity of the solution metric $g$, which only depend on $A$, $n$, $\mu$, and in particular are independent of $t \in [0,1]$. Then it follows from the Ascoli-Arzel\`a Theorem that we can take the limit of a (sub)sequence of solutions, corresponding to times $t_i \to t \in [0,1]$ and obtain a solution at time $t$. 

It remains to be seen that the positivity condition $F > 0$ is also closed. Arguing by contradiction we assume that for the limit solution we have $F \geq 0$ but not $F > 0$. So we have 
\begin{equation*}
(1-\beta) \tilde{F}_{ij}(\xi) + \beta\frac{\mu}{n} v_{ij}(\xi) = 0
\end{equation*}
for some unit vector $\xi$. Recall that $\tilde{F}$ takes the form 
\begin{equation*}
\tilde{F}_{ij} = \left[\frac{1}{2} y^T B y + \varphi(y)\right]_{ij}
\end{equation*}
for a periodic function $\varphi$, so we find
\begin{equation}\label{blowupHess}
\Hes(\varphi)(\xi) = - B(\xi) - \frac{\beta}{1-\beta} \frac{\mu}{n} \Hes{v}(\xi). 
\end{equation}
The equation satisfied by $F_{ij} = (1-\beta) \tilde{F}_{ij} + \beta\frac{\mu}{n} v_{ij}$ in complex coordinates is
\begin{equation*}
v^{ij} \big((1-\beta) \tilde{F}_{ij} + \beta\frac{\mu}{n} v_{ij}\big) = \mu. 
\end{equation*}
It follows that $\tilde{F}$ satisfies the equation
\begin{equation*}
v^{ij} \tilde{F}_{ij} = \mu, 
\end{equation*}
or is terms of $\varphi$
\begin{equation*}
v^{ij} \varphi_{ij} = \mu - v^{ij}B_{ij}. 
\end{equation*}
Recall we are free to normalise $\varphi$ with an additive constant. In particular we can assume that $\varphi$ is $L^2$-orthogonal with respect to the metric $g$ to constant functions, that is, to the kernel of the Laplacian $\Delta_g$. With this assumption we have a standard Schauder estimate
\begin{equation*}
|| \varphi||_{C^{2,\delta}(\Omega)} \leq K_{2, \delta} ||\mu - v^{ij}B_{ij}||_{C^{\delta}(\Omega)},
\end{equation*}
where $K_{k, \delta} > 0$ depends only on $|| v^{ij} ||_{C^{\delta}(\Omega)}$ and the ellipticity constants. By the proof of Proposition \ref{aprioriProp} all these quantities,  with $||\mu - v^{ij}B_{ij}||_{C^{\delta}(\Omega)}$, are uniformly bounded in terms of $\sup|A|$, $n$, $\mu$, assuming only $F \geq 0$ (in particular, independently of $t\in [0,1]$ and $\beta \in (0,1)\cap \Q$). It follows that we have a uniform bound on $||\varphi||_{C^2(\Omega)}$. But \eqref{blowupHess}, together with the strictly positive uniform lower bound on $\Hes(v)$ given by Proposition \ref{aprioriProp}, implies that $\Hes(\varphi)(\xi)$ can be made arbitrarily large by taking $\beta$ sufficiently close to $1$. This is a contradiction, arising from our assumption that at some time $F$ is only semipositive.

The upshot is that for all rational $\beta$ sufficiently close to $1$, depending only on $\sup|A|$, $n$, $\mu$, we have $g$, $F$, providing a solution on the $\Q$-line bundle $(1-\beta) L + \beta \frac{\mu}{n} N$. In general, if $g$, $F$ give a solution to the K\"ahler-Yang-Mills equations \eqref{KYMIntro} on a line bundle, then the same $g$ together with the rescaled $2$-form $\rho F$ solve the system
\begin{align*} 
\begin{cases}
  \Lambda_g (\rho F ) = \rho \mu I\\
  s(g) + \frac{\alpha}{\rho^2} \Lambda^2_g ( (\rho F ) \wedge (\rho F ) ) = \lambda.
\end{cases}
\end{align*}  
We apply this simple consideration to our solution $g$, $F$ above, choosing $\beta = 1 - \frac{1}{k}$ for sufficiently large $k$, and with scaling factor $\rho = n k$. This yields a solution defined on the fibres of $L^n \otimes (k-1)\mu N$, and with parameter $\frac{\alpha}{(n k)^2}$, as claimed by Theorem \ref{MainThmLargeRadius}.
\section{Proof of Theorems \ref{MainThmODE} and \ref{LargeSmallRadiusThmODE} }\label{ODESec}
In the present context $X$ is the abelian surface $\C^2/(\Z^2\oplus\ii\Z^2)$ and the datum $f$ only depends on a single coordinate, say $y_1$. Equivalently, its Legendre transform $A$ only depends on $x_1$.
The coupled equations \eqref{coupled_dHYM} are equivalent to the system
\begin{align*}
\begin{cases}
 \im\left(e^{-\ii\hth}\det(v_{ij} - \ii F_{ij})\right) = 0\\
 -\frac{1}{4} v^{ij}[\log\det(v_{ab})]_{ij} -\alpha \frac{\re\left(e^{-\ii\hth}\det(v_{ij} - \ii F_{ij})\right)}{\det(v_{ab})} = f(y_1),  
\end{cases}
\end{align*}
to be solved for matrices $v$, $F$ of the form
\begin{equation*}
v_{ij} = \left(\begin{matrix}
1 + \psi''(y_1) & 0\\
0 & 1
\end{matrix}\right),\, F_{ij} = F^0_{ij} + \varphi''(y_1)\delta_{1i}\delta_{1j} = \left(\begin{matrix}
a + \varphi''(y_1) & b\\
b & c
\end{matrix}\right). 
\end{equation*}
Here we have, a priori, $1+ \psi''(y_1) > 0$ , while we are not imposing positivity conditions on $F_{ij}$. The phase $e^{\ii\hth}$ is determined by the constraint 
\begin{equation*}
\int_{\Omega} \det(v_{ij} - \ii F_{ij}) \in \R_{>0} e^{\ii\hth}.
\end{equation*}
We have
\begin{equation*}
\det(v_{ij} - \ii F_{ij}) = \big(1 -\det(F^0) + \psi'' - c\varphi''\big)-\ii \big(\tr(F^0) + \varphi'' + c\psi''\big),
\end{equation*}
so integrating using the periodic boundary conditions shows 
\begin{equation*}
e^{\ii\hth} = \frac{1 -\det(F^0) - \ii  \tr(F^0) }{\big(\big(1 -\det(F^0)\big)^2 + \big(\tr(F^0)\big)^2\big)^{1/2}}.
\end{equation*}
Similarly we have
\begin{align*}
&\im\left(e^{-\ii\hth}\det(v_{ij} - \ii F_{ij})\right)\\
& =  -\sin (\hth) \left(1-\det(F^0) - c \varphi''+\psi'' \right)-\cos (\hth) \left(\tr(F^0) + c \psi'' +\varphi ''\right),
\end{align*}
so that the dHYM equation becomes the algebraic identity
\begin{equation*}
\varphi'' = -\frac{\sin (\hth) \left(1 - \det(F^0)\right)+ \cos (\hth) \tr(F^0)+\psi ''(x) (c \cos (\hth)+\sin (\hth))}{\cos (\hth)-c \sin (\hth)}.
\end{equation*}
Using this identity for $\varphi''$ gives
\begin{align*}
&\frac{\re\left(e^{-\ii\hth}\det(v_{ij} - \ii F_{ij})\right)}{\det(v_{ab})}\\&=\frac{b^2}{\left(\psi '' +1\right) (\cos (\hth )-c \sin (\hth ))}-\frac{c^2+1}{c \sin (\hth  )-\cos (\hth )}.
\end{align*}
On the other hand the scalar curvature is given by
\begin{equation*}
-\frac{1}{4} v^{ij}[\log\det(v_{ab})]_{ij} = -\frac{1}{4}\frac{(\log(1+\psi''))''}{1+\psi''},
\end{equation*}
so that the coupled equations \eqref{coupled_dHYM} become the single nonlinear ODE
\begin{align*}
-\frac{1}{4}\frac{(\log(1+\psi''))''}{1+\psi''} -\frac{\alpha b^2}{\left(\psi '' +1\right) (\cos (\hth )-c \sin (\hth ))} +\frac{\alpha(c^2+1)}{c \sin (\hth  )-\cos (\hth )} = f(y_1).
\end{align*}
Now consider the Legendre transform of the convex function \emph{of a single variable}
\begin{equation*}
\frac{1}{2}y^2_1 + \psi(y_1).
\end{equation*}
This takes the form
\begin{equation*}
\frac{1}{2}x^2_1 + \phi(x_1) 
\end{equation*}
for some periodic $\phi(x_1)$, and by the standard Legendre property
\begin{equation*}
1 + \phi''(x_1) = \frac{1}{1 + \psi''(y_1)} 
\end{equation*}
together with the one-dimensional case of Abreu's formula for the scalar curvature
\begin{equation*}
-\frac{1}{4}\frac{(\log(1+\psi(y_1)''))''}{1+\psi(y_1)''} = -\frac{1}{4}\left(\frac{1}{1+\phi''(x_1)}\right)''
\end{equation*}
we find that the above nonlinear ODE, to which we reduced the coupled equations \eqref{coupled_dHYM}, can be written in terms of $\phi(x_1)$ as
\begin{equation}\label{coupled_dHYM_ODE}
-\frac{1}{4}\left(\frac{1}{1+\phi''}\right)'' -\frac{\alpha b^2(1 + \phi'')}{ \cos (\hth )-c \sin (\hth )} +\frac{\alpha(c^2+1)}{c \sin (\hth  )-\cos (\hth )} - A(x_1)=0,
\end{equation}
where $A(x_1)$ denotes the image of $f(y_1)$ under the Legendre transform diffeomorphism, as usual. We are assuming of course the cohomological compatibility condition 
\begin{equation*}
\int^1_0 A(x_1)dx_1 = -\frac{\alpha b^2}{ \cos (\hth )-c \sin (\hth )} -\frac{\alpha(c^2+1)}{c \sin (\hth  )-\cos (\hth )}.
\end{equation*}
In order to prove the existence of a periodic solution $\phi$, satisfying $1+\phi'' >0$, we argue precisely as in the proofs of Theorems \ref{MainThmSurface} and \ref{MainThmLargeRadius}, relying on the same continuity method and the results of Feng-Sz\'ekelyhidi. Thus in order to obtain closedness it is enough to prove that a periodic solution $\phi$ of \eqref{coupled_dHYM_ODE}, with $1+\phi''>0$, would satisfy a priori a uniform $C^0$ bound on the scalar curvature. Equivalently it is enough to prove a uniform a priori $C^0$ bound for the quantity
\begin{equation*}
\left|\frac{\alpha b^2(1 + \phi'')}{ \cos (\hth )-c \sin (\hth )}\right|
\end{equation*}  
for a solution of \eqref{coupled_dHYM_ODE}. But since we have $1 + \phi'' > 0$ by assumption, we only need to show that there is a uniform a priori bound from above, $\phi'' < C$. Thus suppose $\bar{x}_1$ is a point at which $1+\phi''(x_1)$ attains its maximum. Then at $\bar{x}_1$ the quantity $(1+\phi''(x_1))^{-1}$ attains its minimum, so we have
\begin{equation*}
\left(\frac{1}{1+\phi''(\bar{x}_1)}\right)'' \geq 0.
\end{equation*}
Using the equation \eqref{coupled_dHYM_ODE} this shows
\begin{equation*} 
\frac{\alpha b^2(1 + \phi''(\bar{x}_1))}{ \cos (\hth )-c \sin (\hth )} -\frac{\alpha(c^2+1)}{c \sin (\hth  )-\cos (\hth )} \leq  -A(\bar{x}_1) \leq \sup |A|.
\end{equation*}
Since we already know $1 + \phi''(\bar{x}_1) > 0$, and we have $\alpha > 0$, if we further assume the condition
\begin{equation*} 
\frac{b^2}{ \cos (\hth )-c \sin (\hth )} > 0
\end{equation*}
the above inequality immediately gives the required uniform bound $\phi'' < C$. But a little computation shows that we have in fact 
\begin{equation*}
\frac{\alpha b^2}{ \cos (\hth )-c \sin (\hth )} = \frac{b^2} {1+b^2+c^2}\left((1-\det(F^0))^2+(\tr( F^0))^2\right)^{1/2}
\end{equation*}
so this quantity is nonnegative, and only vanishes for $b = 0$, in which case \eqref{coupled_dHYM_ODE} reduces to the (solvable) Abreu equation.

To obtain openness for the continuity method, we need to show that the operator given by the left hand side of \eqref{coupled_dHYM_ODE}, mapping  $C^{k, \alpha}_0(S^1, d\mu)$ to $C^{k-4, \alpha}_0(S^1, d\mu)$, has surjective differential at a solution. 
The differential maps $\dot{\phi}$ to $L\left(\frac{ \dot\phi''}{(1+\phi'')^2}\right)$, where we set, for any $\beta \in C^{k, \alpha}(S^1, d\mu)$, 
\begin{equation*}
L(\beta) = \frac{1}{4}\beta'' -\frac{\alpha b^2 (1+\phi'')^2\beta}{ \cos (\hth )-c \sin (\hth )}.
\end{equation*}  
The operator $L$ acting on $C^{k, \alpha}(S^1, d\mu)$ is formally self-adjoint and has trivial kernel by the condition $\frac{\alpha b^2}{ \cos (\hth )-c \sin (\hth )} > 0$. Thus the equation $L(\beta) = \gamma$ is uniquely solvable for all periodic $\gamma$, and if $\gamma \in C^{k-2, \alpha}_0(S^1, d\mu)$ the unique solution $\beta \in C^{k, \alpha}(S^1, d\mu)$ satisfies $\int (1+\phi'')^2\beta = 0$, so in turn the equation $\frac{ \dot\phi''}{(1+\phi'')^2} = \beta$ is solvable.\\ 

Theorem \ref{LargeSmallRadiusThmODE} follows from similar arguments. The large and small radius limit equations are given by
\begin{align*}
\begin{cases}
  v^{ij} F_{ij} = \mu\\
  -\frac{1}{4} v^{ij}[\log\det(v_{ab})]_{ij} -2\alpha v^{ij}v^{kl} F_{il} F_{kj}+ 2\alpha\mu^2 = f(y_1),  
\end{cases}
\end{align*} 
respectively
\begin{align*}
\begin{cases}
 F^{ij} v_{ij} = \kappa\\
 -\frac{1}{4} v^{ij}[\log\det(v_{ab})]_{ij} -\alpha \frac{\det(F_{ab})}{\det(v_{ab})} = f(y_1),
\end{cases}
\end{align*}
where
\begin{equation*}
\mu = \tr(F^0),\,\kappa= \frac{\tr(F^0)}{\det(F^0)}.
\end{equation*}
In both cases, the HYM (i.e. Poisson) equation and the J-equation can be solved explicitly, giving the identities
\begin{equation*}
\varphi'' = a \psi'',
\end{equation*}
respectively
\begin{equation*}
\varphi'' = \frac{c \det(F^0)}{b^2 + c^2} \psi''.
\end{equation*}
Using these identities, we find that the large radius limit becomes the nonlinear ODE
\begin{equation*}
-\frac{1}{4}\frac{(\log(1+\psi''))''}{1+\psi''} - 2\alpha (a^2+c^2) - \frac{4 \alpha b^2}{1+\psi ''(x)} + 2\alpha (\tr(F^0))^2 = f(y_1),
\end{equation*}
and similarly the small radius limit becomes
\begin{equation*}
-\frac{1}{4}\frac{(\log(1+\psi''))''}{1+\psi''} -\frac{\alpha b^2\det(F^0)}{\left(b^2+c^2\right) \left(1+\psi ''(x) \right)}-\frac{\alpha c^2 \det(F^0)}{b^2+c^2} = f(y_1).
\end{equation*}
Taking the Legendre transform $\frac{1}{2}x^2_1 + \phi$ of the convex function $\frac{1}{2} y^2_1+\psi$, we obtain the ODE
\begin{equation*}
-\frac{1}{4}\left(\frac{1}{1+\phi''(x_1)}\right)'' - 2\alpha (a^2+c^2) - 4 \alpha b^2 (1+\phi ''(x))+ 2\alpha (\tr(F^0))^2 = A(y_1),
\end{equation*}
respectively
\begin{equation*}
-\frac{1}{4}\left(\frac{1}{1+\phi''(x_1)}\right)'' -\frac{\alpha b^2\det(F^0)\left(1+\phi ''(x) \right)}{\left(b^2+c^2\right)}-\frac{\alpha c^2 \det(F^0)}{b^2+c^2} = A(y_1).
\end{equation*}

We can now apply the same maximum principle argument used in the proof of Theorem \ref{MainThmODE}, to conclude that the large radius limit equations are solvable provided the condition
\begin{equation*}
\alpha b^2 > 0
\end{equation*}
is satisfied, and the same holds for the small radius limit equations under the condition
\begin{equation*}
\frac{\alpha b^2\det(F^0)}{\left(b^2+c^2\right)} > 0.
\end{equation*}
Of course the large radius limit condition holds unless $b=0$, in which case we reduce to the (solvable) Abreu equation. On the other hand the small radius limit condition $\det(F^0) > 0$ does give a nontrivial constraint.\\

Finally, let us prove Proposition \ref{HKProp}. Recall that in the proof of Theorem \ref{MainThmODE} we reduced the coupled equations \eqref{coupled_dHYM} to the single nonlinear ODE \eqref{coupled_dHYM_ODE} for the Legendre transform $1 + \phi''(x_1)$. We only need to show that if $A(x_1)$ is real analytic, then so is $1+\phi''(x_1)$. Setting $U(x_1) = (1 + \phi''(x_1))^{-1}$, \eqref{coupled_dHYM_ODE} is equivalent to the system
\begin{align*}
\begin{cases}
  U' = V,\\
  V' = -\left(\frac{4\alpha b^2}{ \cos (\hth )-c \sin (\hth )}\right)\frac{1}{U} -\frac{4\alpha(c^2+1)}{c \sin (\hth  )-\cos (\hth )} -4A(x_1).
\end{cases}
\end{align*}
If $(U, V)$ is a solution with $U > 0$ and $A(x_1)$ is real analytic, it follows from the Cauchy-Kovalevskaya Theorem that $U, V$ are also real analytic. The solution constructed in Theorem \ref{MainThmODE} satisfies $U > 0$, so Proposition \ref{HKProp} follows.

\appendix\section{Linearised equations}
This Appendix studies the linearisation of the K\"ahler-Yang-Mills equations formulated in symplectic coordinates on a torus. In particular we prove that these linearised equations correspond to a scalar linear differential operator which has trivial kernel and is formally self-adjoint, with respect to the Lebesgue measure.
 
Thus we consider the system (\ref{symplEqu}), replacing the datum $A$ with $A_t = tA +(1-t)\int_{\Omega}A(x)d\mu(x)$ for $t \in [0,1]$. In complex coordinates
\begin{equation}\label{CurvDecomposition}
F_{ij}(y) = B_{ij} + \varphi_{ij}(y),
\end{equation}
where $B_{ij}$ denotes a constant symmetric matrix; using this notation, the system (\ref{symplEqu}) in symplectic coordinates has the form:
\begin{align}\label{ContPathSymp2}
\begin{cases}
& u_{ij} B_{ij}+\partial_i u^{ij}\partial_j\varphi = \mu\\
& [u^{ij}]_{ij} + (\partial_i (u^{lm}\varphi_m))(\partial_l(u^{in}\varphi_n))\\ &+B_{ik}B_{jl}u_{ij}u_{kl}+2(\partial_j(u^{kn}\varphi_n))u_{kl}B_{jl}+A_t=0, 
\end{cases}
\end{align} 
where, with a small abuse of notation, $\varphi$ denotes also the Legendre transform of the function in (\ref{CurvDecomposition}).
In order to study the linearization of \eqref{ContPathSymp2}, we consider the linear operator $L(\dot{u})$ associated to the second equation, which has the form
\begin{equation}
\begin{split}
L(\dot{u})=&-\partial^2_{ij}(u^{ia}\dot{u}_{ab}u^{bj})+2(\partial_i(u^{lm}\dot{\varphi}(\dot{u})_m))(\partial_l(u^{in}\varphi_n))\\
& -2(\partial_i(u^{la}\dot{u}_{ab}u^{bm}\varphi_m))(\partial_l(u^{in}\varphi_n)) +2B_{ik}B_{jl}\dot{u}_{ij}u_{kl}\\
&+2(\partial_j(u^{kn}\dot{\varphi}(\dot{u})_n))u_{kl}B_{jl} +2(\partial_j(u^{kn}\varphi_n))\dot{u}_{kl}B_{jl}\\ & -2(\partial_j(u^{ka}\dot{u}_{ab}u^{bn}\varphi_n))u_{kl}B_{jl},
\end{split}
\end{equation}
where $\dot{\varphi}(\dot{u})$ is the unique solution of the linearization of the first equation in \eqref{ContPathSymp2}:
\begin{equation}\label{LinCond}
\dot{u}_{ij}B_{ij} + \Delta\dot{\varphi} -\partial_i(u^{ia}\dot{u}_{ab}u^{bj}\varphi_j) =0
\end{equation}
with the normalization $
\int_X \dot{\varphi} d\mu = 0 $ and with $\Delta = \partial_i u^{ij}\partial_j$.
In order to prove that the operator $L \colon C^{N,\alpha}_{0}(T^n) \to C^{N-4,\alpha}_{0}(T^n)$, at a solution of \eqref{ContPathSymp2}, is invertible for a sufficently large $N\in \mathbb{N}$ and $0<\alpha<1$, is enough to show that $L$ is injective and formally self-adjoint with respect to the $L^2$-product define by the volume form $d\mu$.
Then, by the implicit function theorem, if we have a smooth solution of \eqref{ContPathSymp2} for $\bar{t}\in \left[0,1\right]$, there exist $C^{N,\alpha}$ solutions for $\bar{t}+\varepsilon$, with $\varepsilon <<1$. By standard bootstraping technique, these solutions are actually smooth. 

We prove that, for any $\gamma, \xi \in C^{N,\alpha}_0(T^n)$ with $N>>1$, $\int_{X}\xi L(\gamma) =\int_{X}\gamma L(\xi)$, omitting in the following the volume form, in order to ease the notation.
We split $L(\gamma)=L_0(\gamma) + L_1(\dot{\varphi}(\gamma))$, with 
\begin{equation}\label{LinSpl}
\begin{split}
L_0(\gamma)=&-\partial^2_{ij}(u^{ia}\gamma_{ab}u^{bj})+2(\partial_j(u^{kn}\varphi_n))\gamma_{kl}B_{jl}\\
& -2(\partial_i(u^{la}\gamma_{ab}u^{bm}\varphi_m))(\partial_l(u^{in}\varphi_n)) +2B_{ik}B_{jl}\gamma_{ij}u_{kl}\\
& -2(\partial_j(u^{ka}\gamma_{ab}u^{bn}\varphi_n))u_{kl}B_{jl},\\
\end{split}
\end{equation}
\begin{equation}\label{LinSpl2}
\begin{split}
L_1(\gamma)=&+2(\partial_j(u^{kn}\dot{\varphi}(\gamma)_n))u_{kl}B_{jl} +2(\partial_i(u^{lm}\dot{\varphi}(\gamma)_m))(\partial_l(u^{in}\varphi_n)).\\
\end{split}
\end{equation}
We will show that
\begin{equation*}
\int_X  \left(\xi L_0(\gamma) - \gamma L_0(\xi)\right) = -\int_X  \left(\xi L_1(\gamma) - \gamma L_1(\xi)\right).
\end{equation*}
At a solution of $\eqref{ContPathSymp2}$, integrating by parts, we get the following identities for the different terms of $\int_X\xi L_0(\gamma)$: 
\begin{align*}
&\int \xi\partial^2_{ij}(u^{ia}\gamma_{ab}u^{bj})= \int\gamma\partial^2_{ij}(u^{ia}\xi_{ab}u^{bj});\\
&\int \xi \gamma_{kl}B_{jl}\partial_j(u^{kn}\varphi_n)=\\
&=\int \xi_{kj}u^{kn}\varphi_n\gamma_lB_{jl} + \int \xi_ku^{kn}\varphi_n\gamma_{lj}B_{lj}-\int \xi\gamma_l B_{jl}\partial_j\Delta \varphi;\\
&\int \xi(\partial_i(u^{la}\gamma_{ab}u^{bm}\varphi_m))(\partial_l(u^{in}\varphi_n)) =\\ 
&\int \xi_i u^{in}\varphi_n\partial_j(u^{ja}\gamma_{ab}u^{bl}\varphi_l) -\int \gamma_b u^{bm}\varphi_m\partial_j(u^{ja}\xi_{ab}u^{bl}\varphi_l)\\
&+\int \gamma (u^{la}\xi_{ab}u^{bm}\varphi_m)\partial_l(\Delta \varphi)-\int \xi (u^{la}\gamma_{ab}u^{bm}\varphi_m)\partial_l(\Delta \varphi)\\
&+\int \gamma\partial_a(u^{bm}\varphi_m)\partial_{b}(u^{al}\xi_{li}u^{in}\varphi_n);\\
& \int \xi B_{ik}B_{jl}\gamma_{ij}u_{kl}=\\
&\int \gamma B_{ik}B_{jl}u_{kl}\xi_{ij} +\int\gamma B_{ik}B_{jl}u_{klj}\xi_i -\int \xi B_{ik}B_{jl}u_{kli}\gamma_j;\\
&\int \xi u_{kl}B_{jl}\partial_j(u^{ka}\gamma_{ab}u^{bn}\varphi_n) = \\
&\int \gamma_bu^{bn}\varphi_n \xi_{ja}B_{ja} - \int \gamma\xi_{jb}B_{ja}\partial_a(u^{bn}\varphi_n) -\int \gamma \xi_jB_{ja}\partial_a\Delta \varphi 
\end{align*}
\begin{align*}
&+\int \xi \gamma_bu^{bn}\varphi_n u^{ka}_a u_{jkl}B_{jl} + \int \xi u^{ka}u_{jkl}B_{lj}\gamma_b \partial_a(u^{bn}\varphi_n)\\
&+\int \xi \gamma_b u^{ka}u^{bn}\varphi_n u_{jkla}B_{lj} - \int \gamma\xi_au^{ka}_b u^{bn}\varphi_n u_{jkl}B_{lj}\\
&- \int \gamma\xi_au^{ka} u_{jkl}B_{lj}\Delta \varphi   - \int \gamma u^{bn}\varphi_n \xi_a u^{ak} u_{jklb}B_{lj}\\ 
&+\int (\partial_j(u^{ka}\xi_{ab}u^{bn}\varphi_n))u_{kl}B_{lj}\gamma + \int \xi_{ab}u^{ka}u^{bn}\varphi_nu_{kl}B_{lj}\gamma_j. 
\end{align*}
Hence, after several cancellations, we get
\begin{align*}
&-\int_X \xi L_0 (\gamma) + \int_X \gamma L_0(\xi)=\\
& =\underbrace{\int \xi_iu^{in}\varphi_n\partial_j(u^{ja}\gamma_{ab}u^{bl}\varphi_l)}_{(i)} -\underbrace{\int (\gamma_bu^{bm}\varphi_m)\partial_j(u^{ja}\xi_{ab}u^{bl}\varphi_l)}_{(ii)}\\
&+ \underbrace{\int \xi_{ja}B_{ja}\gamma_bu^{bn}\varphi_n}_{(iii)} -\underbrace{\int \gamma_{ja}B_{ja}\xi_bu^{bn}\varphi_n}_{(iv)} .
\end{align*}
Consider now $(i)$ and $(iv)$; using \eqref{LinCond}, we get:
\begin{align*}
&\int \xi_iu^{in}\varphi_n(\partial_j(u^{ja}\gamma_{ab}u^{bl}\varphi_l) -\gamma_{ja}B_{ja} )\\
&= \int \xi_iu^{in}\varphi_n \Delta \dot{\varphi}(\gamma) \\
&= -\int u^{kn}\dot{\varphi}(\gamma)_n \xi_i\partial_k(u^{in}\varphi_n) -\int \dot{\varphi}(\gamma)_nu^{kn} \xi_{ik}u^{im}\varphi_m \\
&=\int \xi  (\partial_i(u^{kn}\dot{\varphi}(\gamma)_n))(\partial_k(u^{in}\varphi_n)) + \int\xi  u^{kn}\dot{\varphi}(\gamma)_n \partial_k \Delta \varphi\\
&-\int \dot{\varphi}(\gamma)_nu^{kn} \xi_{ik}u^{im}\varphi_m \\
&=\int \xi  (\partial_i(u^{kn}\dot{\varphi}(\gamma)_n))(\partial_k(u^{in}\varphi_n))+ \int \dot{\varphi}(\gamma)\partial_n (u^{nk}\xi_{ki}u^{im}\varphi_m)\\
&-\int \dot{\varphi}(\gamma)_n  u^{kn}\xi  u_{klm}B_{lm}\\
&=\int \xi  (\partial_i(u^{kn}\dot{\varphi}(\gamma)_n))(\partial_k(u^{in}\varphi_n))+ \int \dot{\varphi}(\gamma)\partial_n (u^{nk}\xi_{ki}u^{im}\varphi_m)\\
\end{align*}
\begin{align*}
&+ \int \xi  (\partial_m(u^{kn}\dot{\varphi}(\gamma)_n))u_{kl}B_{lm} + \int \xi_m \dot{\varphi}(\gamma)_nB_{nm} \\
&=\int \xi  (\partial_i(u^{kn}\dot{\varphi}(\gamma)_n))(\partial_k(u^{in}\varphi_n))+ \int \xi  u_{kl}B_{lm}\partial_m(u^{kn}\dot{\varphi}(\gamma)_n)\\
&+\int \dot{\varphi}(\gamma) \Delta \dot{\varphi}(\xi) \, .
\end{align*}
Notice that the first two terms coincide with $\int_X \xi L_1(\gamma)$, while the third one is symmetric in $\xi$ and $\gamma$. An identical computation for $(ii)$ and $(iii)$ shows that $L$ is formally self-adjoint.

A similar argument using repeated integration by parts proves that $\int_{X} \gamma L (\gamma) \leq 0$, with $\int_{X} \gamma L (\gamma)=0$ if and only if $\gamma=0$, so that $L$ has trivial kernel.

\end{document}